\newfont{\cyr}{wncyr10 scaled 1100}
\theoremstyle{plain}
\newtheorem{theorem}{Theorem}[section]
\newtheorem{lemma}[theorem]{Lemma}
\newtheorem{propo}[theorem]{Proposition}
\theoremstyle{definition}
\newtheorem{defi}[theorem]{Definition}
\newtheorem{examplewr}[theorem]{Example}
\theoremstyle{remark}
\newtheorem{obswr}[theorem]{Observation}
\newtheorem{remarkwr}[theorem]{Remark}
\newenvironment{remark}{\begin{remarkwr}\begin{upshape}}{\end{upshape}\end{remarkwr}}
\DeclareMathOperator{\dR}{\mathrm{dR}}
\DeclareMathOperator{\wt}{wt}
\DeclareMathOperator{\BK}{BK}
\DeclareMathOperator{\cyc}{cyc}
\DeclareMathOperator{\fin}{f}
\DeclareMathOperator{\Spf}{Spf}
\DeclareMathOperator{\triv}{triv}
\DeclareMathOperator{\ac}{ac}
\DeclareMathOperator{\loc}{loc}
\DeclareMathOperator{\cont}{cont}
\DeclareMathOperator{\ad}{ad}
\newcommand{\cC}{\mathcal C}
\newcommand{\Gal}{\mathrm{Gal\,}}
\newcommand{\ord}{{\mathrm{ord}}}
\newfont{\gotip}{eufb10 at 12pt}
\newcommand{\cL}{{\mathcal L}}
\newcommand{\hf}{{\mathbf{f}}}
\newcommand{\hg}{{\mathbf{g}}}
\newcommand{\hh}{{\mathbf h}}
\DeclareMathOperator{\Hom}{Hom}
\begin{document}

\title[The exceptional zero phenomenon for elliptic units]{The exceptional zero phenomenon for elliptic units}

\author{\'Oscar Rivero}

\begin{abstract}
The exceptional zero phenomenon has been widely studied in the realm of $p$-adic $L$-functions, where the starting point lies in the foundational works of Ferrero--Greenberg, Katz, Gross, or Mazur--Tate--Teitelbaum, among others. This phenomenon also appears in the study of Euler systems, and in this case one is led to study higher order derivatives of cohomology classes in order to extract the arithmetic information which is usually encoded in the explicit reciprocity laws which make the connection with $p$-adic $L$-funtions. In this work, we focus on the elliptic units of an imaginary quadratic field and study this exceptional zero phenomenon, proving an explicit formula relating the logarithm of a {\it derived} elliptic unit either to special values of Katz's two variable $p$-adic $L$-function or to its derivatives. Further, we interpret this fact in terms of an $\mathcal L$-invariant and relate this result to other approaches to the exceptional zero phenomenon, most notably to the work of Bley, and we also compare this setting with other scenarios concerning Heegner points and Beilinson--Flach elements.
\end{abstract}

\address{O. R.: Departament de Matem\`{a}tiques, Universitat Polit\`{e}cnica de Catalunya, C. Jordi Girona 1-3, 08034 Barcelona, Spain}
\email{oscar.rivero@upc.edu}

\subjclass[2010]{11G16; 11F67}

\maketitle

\tableofcontents

\section{Introduction}
Since the introduction of the exceptional zero phenomenon for the Kubota--Leopoldt $p$-adic $L$-function by Ferrero and Greenberg \cite{FG} and for the $p$-adic $L$-function attached to an elliptic curve by Mazur, Tate and Teitelbaum \cite{MTT}, a lot of progress has been made in the study of this topic. The main goal of this article is to study exceptional zero phenomena for Katz's two-variable $p$-adic $L$-function at points lying {\em outside} the region of classical interpolation, where the Euler system of elliptic units vanishes. Hence, our setting departs notably from loc.\,cit.\,and is closer in spirit to the study of e.\,g.\,\cite{Cas} and \cite{RiRo}, sharing some points in common with earlier work of Solomon \cite{Sol} and Bley \cite{Bl}, and also with the recent preprint of B\"uy\"ukboduk and Sakamoto \cite{BS}.

Fix once and for all a prime $p$ and a quadratic imaginary field $K$ in which $p$ splits, and fix embeddings $\mathbb C \hookleftarrow \bar{\mathbb Q} \hookrightarrow \mathbb C_p$. Let $\mathfrak p$ and $\bar{\mathfrak p}$ be the two prime ideals lying over $p$, with $\mathfrak p$ the prime above $p$ induced by the previously fixed embedding $\bar{\mathbb Q} \hookrightarrow \mathbb C_p$ (assume in the introduction for notational simplicity that both ideals are principal, and with a slight abuse of notation we denote by $\mathfrak p$ and $\bar{\mathfrak p}$ their generators). Let $K_{\infty}^{\cyc}$ and $K_{\infty}^{\ac}$ be the cyclotomic and anticyclotomic $\mathbb Z_p$-extensions of $K$, respectively, and set $K_{\infty} = K_{\infty}^{\ac} K_{\infty}^{\cyc}$. Denote $\Gamma_K = \Gal(K_{\infty}/K)$ and $\Lambda_K = \mathbb Z_p[[\Gamma_K]]$. The latter is a Galois module with an appropriate {\it tautological action} that we later recall. The weight space is the formal spectrum $\Spf(\Lambda_K)$ of the two-variable Iwasawa algebra $\Lambda_K$. Let $\psi$ stand for a Hecke character of finite order, conductor $\mathfrak n$ and taking values in a number field $F$, and let $\mathcal N$ denote the norm character of $K$. We denote by $N$ the norm of $\mathfrak n$, and assume that $(N,p)=1$. Finally, let $F_p$ denote a completion of $F$ at the prime $p$. The Hecke character $\psi$ may be also understood as a Galois character $\psi: G_K \rightarrow F^{\times}$; the notation $\psi'$ will be used to designate the composition of $\psi$ with conjugation by the non-trivial element in $\Gal(K/\mathbb Q)$: $\psi'(\sigma) = \psi(\tau \sigma \tau^{-1})$, where $\tau$ is any element of $G_{\mathbb Q}$ which acts non-trivially on $K$.

As an additional piece of notation, let $H_{\infty}$ denote the unique $\mathbb Z_p$-extension of $K$ in which $\bar{\mathfrak p}$ is unramified (therefore, the prime $\mathfrak p$ is ramified in $H_{\infty}/K$). We choose a Galois character $\lambda$ of $\Gamma_K$, so that it factors through $\Gal(H_{\infty}/K)$, defining an isomorphism \[ \Gal(H_{\infty}/K) \longrightarrow 1+p\mathbb Z_p. \] The choice of $\lambda$ is unique once we require that it is the Galois representation corresponding to a Gr\"ossencharacter for $K$ of infinity type $(1,0)$. We define in the same way an extension $H_{\infty}'$ and a character $\lambda'$, exchanging the roles of $\mathfrak p$ and $\bar{\mathfrak p}$. Although characters of $\Gamma_K$ are elements of $\Hom_{\cont}(G_K, \bar{\mathbb Q}_p^{\times})$, we are interested in the restriction to the subspace of $\Hom_{\cont}(G_K,\bar{\mathbb Q}_p^{\times})$ given by \[ \Sigma_{\psi} := \{ \psi \xi \mathcal N^s \lambda^t \text{ such that } (s,t) \in \mathbb Z_p^2 \}, \] where $\xi$ is a finite order character of $p$-power conductor.

One can naturally associate, via the theory of elliptic units of Robert \cite{Rob} (see also de Shalit and Yager's approaches \cite{deS},\,\cite{Yag}) and Kummer maps, a global cohomology class to $\psi$
\begin{equation}
\kappa_{\psi,\infty} \in H^1(K,\Lambda_K \otimes F_p(\psi^{-1})(\mathcal N)).
\end{equation}
We denote by $\kappa_{\psi} \in H^1(K,F_p(\psi^{-1}))$ and $\kappa_{\psi \mathcal N} \in H^1(K,F_p(\psi^{-1})(\mathcal N))$ the specializations of $\kappa_{\psi,\infty}$ at $\psi$ and $\psi \mathcal N$, respectively.
In section 4, we prove that $\kappa_{\psi}$ never vanishes, while $\kappa_{\psi \mathcal N}$ vanishes if and only if $\psi(\mathfrak p)=1$ or $\psi(\bar{\mathfrak p})=1$. In these cases, there exists a notion of {\it derived} cohomology class along the subvariety $\cC'$ of weight space, that we define as the Zariski closure of the points $\mathcal N \bar \lambda^t$ with $t \in \mathbb Z^{\geq 0}$, in an appropriate sense we make precise later on, \[ \kappa_{\psi,\infty}' \in H^1(K,\Lambda_K \otimes F_p(\psi^{-1})(\mathcal N)_{|\cC'}). \]
The specialization of this cohomology class at $\psi \mathcal N$ encodes the arithmetic information which is given by $\kappa_{\psi \mathcal N}$ in a non-exceptional situation. To be more explicit, denote by $u_{\mathfrak n}$ a fixed choice of an elliptic unit of conductor $\mathfrak n$, and to lighten notations, define
\begin{equation}
u_{\psi} = \prod_{\sigma \in \Gal(K_{\mathfrak n}/K)} (\sigma u_{\mathfrak n})^{\psi^{-1}(\sigma)} \in (\mathcal O_{K_{\mathfrak n}}^{\times} \otimes F)^{\psi},
\end{equation}
where $K_{\mathfrak n}$ is the field cut out by $\psi$. We expect a relation between $\kappa_{\psi \mathcal N}'$ and $u_{\psi}$, since they both lie in the same space (after applying a Kummer map), and this is the content of the main result of this note, which we now state.

Assume that $\psi(\mathfrak p)=1$, and let us define the following $\mathcal L$-invariant
\begin{equation}
\mathcal L(\psi) = (1-\psi^{-1}(\bar{\mathfrak p})) \cdot \log_p(\mathfrak p),
\end{equation}
where $\log_p$ stands for the usual $p$-adic Iwasawa logarithm.

Then, we have the following result, proved in Section \ref{11}.
\begin{theorem}\label{teo1}
Suppose that $\psi(\mathfrak p)=1$. Then,
\begin{equation}
\kappa_{\psi \mathcal N}' = \mathcal L(\psi) \cdot u_{\psi}.
\end{equation}
\end{theorem}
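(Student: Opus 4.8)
The plan is to reduce everything to a factorisation of the Euler-system class into a product of local factors times the Kummer image of the elliptic unit, and then to differentiate. The crucial structural input is that $\kappa_{\psi,\infty}$ is built from a single norm-compatible system of Robert's elliptic units, so that along the one-variable family $\cC'$ each specialisation at $\psi\mathcal N\bar\lambda^{t}$ should differ from $u_\psi$ only by the two local Euler factors at $\mathfrak p$ and $\bar{\mathfrak p}$. First I would make this explicit, using the de Shalit--Yager reciprocity law together with the computations of Section~4 that produce the vanishing criterion, and write the specialisation in the schematic form
\begin{equation*}
\mathcal E_{\mathfrak p}(t)\cdot\mathcal E_{\bar{\mathfrak p}}\cdot u_{\psi},
\end{equation*}
where $\mathcal E_{\bar{\mathfrak p}}=1-\psi^{-1}(\bar{\mathfrak p})$ is the constant Euler factor at $\bar{\mathfrak p}$ and $\mathcal E_{\mathfrak p}(t)$ is the Euler factor at $\mathfrak p$, normalised so that $\mathcal E_{\mathfrak p}(0)=1-\psi^{-1}(\mathfrak p)$. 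The hypothesis $\psi(\mathfrak p)=1$ is exactly what forces $\mathcal E_{\mathfrak p}(0)=0$, and this is the source of the exceptional zero: the class $\kappa_{\psi\mathcal N}$ vanishes because its $\mathfrak p$-factor does, while $\mathcal E_{\bar{\mathfrak p}}$ and $u_\psi$ remain regular.

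With the factorisation in hand, the derived class is by construction the leading Taylor coefficient of this family at $t=0$. Since $u_\psi$ is regular and $\mathcal E_{\bar{\mathfrak p}}$ is constant, only the vanishing factor contributes, giving
\begin{equation*}
\kappa_{\psi\mathcal N}'=\left.\frac{d}{dt}\mathcal E_{\mathfrak p}(t)\right|_{t=0}\cdot\bigl(1-\psi^{-1}(\bar{\mathfrak p})\bigr)\cdot u_{\psi}.
\end{equation*}
The entire theorem therefore reduces to the identity $\tfrac{d}{dt}\mathcal E_{\mathfrak p}(t)|_{t=0}=\log_p(\mathfrak p)$. To establish it I would use that the $t$-dependence of $\mathcal E_{\mathfrak p}$ is governed by the value at $\mathfrak p$ of the Gr\"ossencharacter moving along $\cC'$, and that $\lambda$ was normalised as the infinity-type $(1,0)$ character cutting out the extension $H_{\infty}/K$ ramified precisely at $\mathfrak p$; its logarithmic derivative at the identity recovers the $p$-adic Iwasawa logarithm evaluated on the local uniformiser, namely $\log_p(\mathfrak p)$. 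Combining the two factors yields $\kappa_{\psi\mathcal N}'=(1-\psi^{-1}(\bar{\mathfrak p}))\log_p(\mathfrak p)\cdot u_\psi=\mathcal L(\psi)\cdot u_\psi$.

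The main obstacle lies in controlling the local picture at $\mathfrak p$ in the exceptional range and pinning down this last derivative. When $\psi(\mathfrak p)=1$ the twist by $\mathcal N$ turns the local representation $F_p(\psi^{-1})(\mathcal N)|_{G_{K_{\mathfrak p}}}$ into the cyclotomic-type module $F_p(1)$, so that $H^1(K_{\mathfrak p},F_p(1))\cong K_{\mathfrak p}^{\times}\,\widehat{\otimes}\,F_p$ splits into a \emph{logarithm} line and an \emph{order} line. I would have to verify that the derivation along $\cC'$ lands in the logarithm line and extracts exactly $\log_p(\mathfrak p)$, rather than $\mathrm{ord}_p$ or $\log_p(\bar{\mathfrak p})$, by tracking the normalisations in the Coleman/Perrin-Riou big logarithm and in the reciprocity law of Section~4. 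Once the correct line, sign and normalisation are confirmed, the assembly above gives the stated formula.
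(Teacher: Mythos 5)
Your argument hinges on the claim that, along $\cC'$, the specialisation of $\kappa_{\psi,\infty}$ at $\psi\mathcal N\bar\lambda^{t}$ factors as $\mathcal E_{\mathfrak p}(t)\cdot\bigl(1-\psi^{-1}(\bar{\mathfrak p})\bigr)\cdot u_{\psi}$, with all $t$-dependence carried by the Euler factor and with $u_\psi$ a constant ``regular part''. This is the gap, and it is fatal as stated. For $t\neq 0$ the identity is not even well-formed: $\kappa_{\psi\mathcal N\bar\lambda^{t}}$ lives in $H^1(K,F_p(\psi^{-1})(\mathcal N\bar\lambda^{t}))$, a different coefficient module for each $t$, so it cannot be a scalar multiple of the fixed unit $u_\psi$. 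Nor does a numerical version of the family factorisation hold: by Yager's reciprocity law (Proposition \ref{lareclaw}) the Bloch--Kato logarithm of the localised class at $t$ is an Euler factor times the Katz $L$-value $L_{\mathfrak p}(K,(\psi')^{-1})(\mathcal N\bar\lambda^{t})$, and this $L$-value genuinely varies with $t$; only at $t=0$ does it collapse, via the functional equation \eqref{fe} and the Kronecker limit formula \eqref{special2}, to a multiple of $\log_p(u_\psi)$. What the Euler-system machinery actually provides is therefore a statement at the single point $t=0$ (Lemma \ref{dos-fact}), and an identity valid at one point cannot be differentiated. Your sentence ``the derived class is by construction the leading Taylor coefficient of this family'' is thus precisely the assertion to be proved; building the constancy of the ``unit part'' into the assumed factorisation begs the question.

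The paper avoids this by differentiating the reciprocity law itself, as an identity of functions along the analytic direction. A further point your proposal does not engage with: when $\psi(\mathfrak p)=1$ it is the \emph{denominator} Euler factor of the Perrin-Riou map of Proposition \ref{aquest-perrin} that vanishes at the norm character (case (ii) of Section 3.3), so the reciprocity law cannot be specialised or differentiated there directly. The paper first clears this denominator, defining the auxiliary function $\mathcal E_{\mathfrak p}(K,(\psi')^{-1})$ of \eqref{1exp}, which admits the second analytic expression \eqref{2exp} through $-\bigl(1-\pi_{\mathfrak p}^{-1/h}\pi_{\bar{\mathfrak p}}^{-\kappa_1/h}\bigr)\cdot t\log_{\BK}(\loc_{\mathfrak p}\kappa)$; both expressions vanish at $\mathcal N$, and differentiating each and equating yields Proposition \ref{main2}. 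Only then do the functional equation and Kronecker limit formula convert the $L$-value into $\log_p(u_\psi)$, and finally the resulting \emph{numerical} identity is lifted to an identity of units: since the derivative is taken along the $\bar{\mathfrak p}$-ramified direction, the derived class can fail to be a global unit only at $\bar{\mathfrak p}$, and when $\psi(\bar{\mathfrak p})\neq 1$ there are no extra $\bar{\mathfrak p}$-units, so $\log_p$ is injective on the relevant $\psi$-component. This last step is exactly the ``logarithm line versus order line'' issue you flag but leave open; it is an essential part of the proof, not a normalisation check. Your computation of the Euler-factor derivative, giving $\log_p(\mathfrak p)=\log_p(\pi_{\mathfrak p}^{1/h})$, does agree with the paper's, but it is applied to a factorisation that has not been (and cannot be, in that form) established.
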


Although the previous result does not require any explicit mention to the theory of $p$-adic $L$-functions, it is fair to say that Katz's two variable $p$-adic $L$-function plays a prominent role in our results. The interplay between the Euler system of elliptic units and Katz's two variable $p$-adic $L$-function can be set as a very particular case of a wider theory. One may distinguish two main approaches to {\it construct} a $p$-adic $L$-function.
\begin{enumerate}
\item[(a)] Firstly, interpolating the algebraic parts of the special values of the classical $L$-function along the so-called {\it critical} region. This requires, as a starting point, the proof of certain algebraicity results.
\item[(b)] Secondly, as the image under a certain Perrin-Riou map of a family of cohomology classes, constructed along the so-called {\it geometric} region. These classes are typically obtained as the image under certain regulators of distinguished elements arising from the geometry of algebraic varieties.
\end{enumerate}
In both approaches, the $p$-adic $L$-function is completely characterized by the value at the points lying either at the {\it critical} or at the {\it geometric} region. Moreover, some Euler factors arise, measuring the discrepancy between the interpolation of $L$-values in the critical region and the interpolation of cohomology classes in the geometric region. The vanishing of these factors lead us to study exceptional zero phenomena. In the case of the Perrin-Riou map, the shape of these factors is $\frac{1-p^j \phi}{1-p^{-1-j}\phi^{-1}}$, where $j$ is related to the Hodge--Tate type of the character at which we are specializing, and $\phi$ refers to a Frobenius eigenvalue. As it is suggested for instance in \cite[Section 8]{KLZ} or \cite{LZ}, there are two kinds of Euler factors in the usual Perrin-Riou maps: those appearing in the numerator (which typically lead to an exceptional vanishing of the $p$-adic $L$-function via explicit reciprocity laws) and those appearing in the denominator (which lead to an exceptional vanishing of the cohomology class). While the former phenomenon has been widely studied, as far as we know the latter has only been discussed with the tools from Perrin-Riou theory in the setting of Heegner points in \cite{Cas} and for Beilinson--Flach elements in \cite{RiRo}. Nevertheless, similar results have been obtained by Bley \cite{Bl}, although there are some differences we later discuss.

Katz's two-variable $p$-adic $L$-function $L_{\mathfrak p}(K)(\cdot)$ is defined on the domain $\Hom_{\cont}(G_K, \bar{\mathbb Q}_p^{\times})$, but we can consider its restriction to
\begin{equation}\label{1-acl}
 \Sigma_{\psi} = \{ \psi \xi \mathcal N^s \lambda^t \text{ such that } (s,t) \in \mathbb Z_p^2 \},
\end{equation}
where $\xi$ is a finite order character of $p$-power conductor. This allows us to make use of the techniques relative to $p$-adic variation, sharing some points in common with Hida theory. We write $L_{\mathfrak p}(K,\psi)(\cdot)$ for the restriction of $L_{\mathfrak p}(K)(\cdot)$ to the subspace of characters given in \eqref{1-acl}, and denote $L_{\mathfrak p}(K,\psi)(\chi_{\triv}):=L_{\mathfrak p}(K)(\psi)$ and $L_{\mathfrak p}(K,\psi)(\mathcal N):=L_{\mathfrak p}(K)(\psi \mathcal N)$. In our case, and because of the dualities involved in the Perrin-Riou formalism, we are also interested in the function $L_{\mathfrak p}(K,(\psi')^{-1})$.

We can now describe the main ingredients involved in the proof of Theorem \ref{teo1}.

\begin{enumerate}
\item[(a)] An {\bf explicit reciprocity law} for Katz's two-variable $p$-adic $L$-function due to Yager. This expresses the special value $L_{\mathfrak p}(K,(\psi')^{-1})(\mathcal N)$ in terms of the image under a Perrin-Riou map of the cohomology class $\kappa_{\psi \mathcal N}$, and directly gives us that $\loc_{\mathfrak p}(\kappa_{\psi \mathcal N})=0$ when $\psi(\mathfrak p) = 1$, due to the vanishing of an Euler factor. Here, $\loc_{\mathfrak p}$ stands for the localization at $\mathfrak p$. The explicit description of the localization-at-$\mathfrak p$ map shows that we can conclude that $\kappa_{\psi \mathcal N}=0$ and consider the {\it derived} cohomology class. We refer the reader to Sections \ref{llei} and \ref{11} for a proper definition of {\it derived} class and for more details on that.

\item[(b)] A {\bf derived reciprocity law}, expressing the Bloch--Kato logarithm of the {\it derived} class in terms of $L_{\mathfrak p}(K,(\psi')^{-1})(\mathcal N)$. This requires an explicit description of the Perrin-Riou map, which at the norm character interpolates the Bloch--Kato logarithm and gives a map \[ \log_{\BK}:  H^1(K_{\mathfrak p},F_p(\psi^{-1})(\mathcal N)) \longrightarrow \mathbb D_{\dR}(F_p(\psi^{-1})) \simeq  F_p. \] Under the identification induced by the Kummer morphism, this map corresponds, in a sense that we later make precise, to the usual $p$-adic logarithm. Then, we have the following result, whose proof is given in Section \ref{11}.

\begin{propo}\label{main2}
Assume that $\psi(\mathfrak p)=1$. Then, \[ \log_p(\mathfrak p) \cdot L_{\mathfrak p}(K,(\psi')^{-1})(\mathcal N) = -(1-p^{-1}) \cdot \log_p(\loc_{\mathfrak p}(\kappa'_{\psi \mathcal N})). \]
\end{propo}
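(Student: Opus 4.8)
The plan is to obtain this derived reciprocity law from Yager's explicit reciprocity law by specializing it in a one-parameter family along $\cC'$ and then differentiating, turning an apparent $0/0$ indeterminacy into the desired identity. First I would recall, from Section \ref{llei}, Yager's explicit reciprocity law in its Iwasawa-theoretic form: it identifies the restriction $L_{\mathfrak p}(K,(\psi')^{-1})$ of Katz's two-variable $p$-adic $L$-function with the image of $\loc_{\mathfrak p}(\kappa_{\psi,\infty})$ under Perrin--Riou's big logarithm $\mathscr L$. Evaluating at the characters $\mathcal N\bar\lambda^t$ that are Zariski-dense in $\cC'$, the interpolation property of $\mathscr L$ in the range containing the norm character yields, for $t$ near $0$, an identity
\[ L_{\mathfrak p}(K,(\psi')^{-1})(\mathcal N\bar\lambda^t) = \frac{\mathcal E^+(t)}{\mathcal E^-(t)} \cdot \log_{\BK}\bigl(\loc_{\mathfrak p}(\kappa_{\psi\mathcal N\bar\lambda^t})\bigr), \]
where $\mathcal E^{\pm}(t)$ are the Euler factors prescribed by the Perrin--Riou formalism and $\log_{\BK}$ is the Bloch--Kato logarithm, valued in $\mathbb D_{\dR}(F_p(\psi^{-1}))\simeq F_p$.

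The crux is the behaviour at $t=0$ under the hypothesis $\psi(\mathfrak p)=1$. As explained in ingredient (a), this is precisely the situation in which the \emph{denominator} Euler factor vanishes: $\mathcal E^-(0)=0$. Since $L_{\mathfrak p}$ is regular, the localized class must vanish to compensate, which is exactly the statement $\loc_{\mathfrak p}(\kappa_{\psi\mathcal N})=0$, and hence $\log_{\BK}(\loc_{\mathfrak p}(\kappa_{\psi\mathcal N\bar\lambda^t}))$ also has a simple zero at $t=0$. I would then record the two first-order expansions
\[ \mathcal E^-(t) = (\mathcal E^-)'(0)\,t + O(t^2), \qquad \loc_{\mathfrak p}(\kappa_{\psi\mathcal N\bar\lambda^t}) = t\cdot\loc_{\mathfrak p}(\kappa'_{\psi\mathcal N}) + O(t^2), \]
the second being the very definition of the derived class $\kappa'_{\psi\mathcal N}$ along $\cC'$ (Section \ref{11}). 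Passing to the limit $t\to 0$, the two simple zeros cancel and the reciprocity law becomes
\[ L_{\mathfrak p}(K,(\psi')^{-1})(\mathcal N) = \frac{\mathcal E^+(0)}{(\mathcal E^-)'(0)} \cdot \log_{\BK}\bigl(\loc_{\mathfrak p}(\kappa'_{\psi\mathcal N})\bigr). \]

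It then remains to evaluate the two constants, which is where the arithmetic of the $\mathcal L$-invariant enters. Differentiating $\mathcal E^-$ produces a factor $\tfrac{d}{dt}\bar\lambda^t|_{t=0}$, and under the reciprocity/Artin description of $\bar\lambda$ this contributes $\log_p(\mathfrak p)$, so that $(\mathcal E^-)'(0) = c\,\log_p(\mathfrak p)$ for an explicit nonzero constant $c$; evaluating the surviving numerator factor at the norm character gives the clean local Euler factor $\mathcal E^+(0) = -c\,(1-p^{-1})$. Finally I would invoke ingredient (b), namely the identification of $\log_{\BK}$ with the usual $p$-adic logarithm $\log_p$ under the Kummer morphism, to replace $\log_{\BK}(\loc_{\mathfrak p}(\kappa'_{\psi\mathcal N}))$ by $\log_p(\loc_{\mathfrak p}(\kappa'_{\psi\mathcal N}))$. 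Substituting and clearing the common constant $c$ yields exactly
\[ \log_p(\mathfrak p)\cdot L_{\mathfrak p}(K,(\psi')^{-1})(\mathcal N) = -(1-p^{-1})\cdot \log_p\bigl(\loc_{\mathfrak p}(\kappa'_{\psi\mathcal N})\bigr). \]
The hard part will be the rigorous justification that differentiation along $\cC'$ commutes with $\loc_{\mathfrak p}$ and with $\mathscr L$, and the exact determination of the Euler factors so that $(\mathcal E^-)'(0)$ and $\mathcal E^+(0)$ come out as $\log_p(\mathfrak p)$ and $-(1-p^{-1})$; the compatibility of $\log_{\BK}$ with $\log_p$ must moreover be checked at the level of leading terms, since here both sides are derivatives of vanishing quantities rather than honest values.
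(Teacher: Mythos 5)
Your strategy is the same as the paper's: the paper packages your ``cleared-denominator'' identity into the auxiliary function $\mathcal E_{\mathfrak p}(K,(\psi')^{-1})$ of \eqref{1exp}--\eqref{2exp}, equal on one hand to (denominator Euler factor) times $L_{\mathfrak p}(K,(\psi')^{-1})$ and on the other, via Proposition \ref{lareclaw} and Proposition \ref{aquest-perrin}, to minus (numerator Euler factor) times $t\log_{\BK}(\loc_{\mathfrak p}(\kappa))$; both expressions have simple zeros at the norm character, and differentiating each and matching leading terms is exactly your limit argument, with the remark $(\log_{\BK}(\kappa))'=\log_{\BK}(\kappa')$ supplying the commutation you flag at the end, and with the ``hard part'' you defer (the exact Euler factors) read off directly from Proposition \ref{aquest-perrin}.

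One caveat, and it sits precisely where you defer the work. Your displayed specialization of the reciprocity law runs \emph{both} the $L$-argument and the class along $\cC'$, which is inconsistent with the duality built into the Perrin--Riou map: in Proposition \ref{aquest-perrin}, the specialization at a character of infinity type $(\kappa_1,\kappa_2)$ acts on cohomology twisted by the \emph{conjugate} type $(\kappa_2,\kappa_1)$. Accordingly, the paper evaluates $L_{\mathfrak p}(K,(\psi')^{-1})$ along $\mathcal C$, at the characters $\lambda^{\kappa_1}\bar\lambda$, while the corresponding classes $\kappa_{\psi\lambda\bar\lambda^{\kappa_1}}$ run along $\cC'$. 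The mislabeling is harmless on the $L$-side, since only the value $L_{\mathfrak p}(K,(\psi')^{-1})(\mathcal N)$, and not any derivative of $L$, survives the computation; but it matters for your constants: the denominator factor must be differentiated in the correct parameter, giving
\[
\frac{d}{d\kappa_1}\Bigl(1-\pi_{\mathfrak p}^{1/h}\pi_{\bar{\mathfrak p}}^{\kappa_1/h}p^{-1}\Bigr)\Big|_{\kappa_1=1}
 = -\log_p\bigl(\pi_{\bar{\mathfrak p}}^{1/h}\bigr) = +\log_p\bigl(\pi_{\mathfrak p}^{1/h}\bigr),
\]
whereas differentiating the factor attached to your literal family, namely $1-\pi_{\mathfrak p}^{t/h}$, yields $-\log_p(\mathfrak p)$ and flips the sign of the final formula; alternatively, if one keeps that factor and corrects the pairing, the class being differentiated is the one running along $\mathcal C$, whose derivative is a different derived class (the one in Bley's direction, governed by $\ord_{\mathfrak p}$ rather than by $\log_p$), not the $\kappa'_{\psi\mathcal N}$ of the statement. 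So the skeleton is right, but the duality bookkeeping must be straightened before the constants $\log_p(\mathfrak p)$ and $-(1-p^{-1})$ can legitimately be claimed.
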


\item[(c)] The {\bf functional equation} for Katz's two-variable $p$-adic $L$-function (see \cite[p.90--91]{Gro}), which asserts that \[ L_{\mathfrak p}(K,(\psi')^{-1})(\mathcal N)  = L_{\mathfrak p}(K,\psi)(\chi_{\triv}). \]

\item[(d)] {\bf Katz's $p$-adic version of Kronecker limit formula}, expressing the special value $L_{\mathfrak p}(K,\psi)(\chi_{\triv})$ in terms of the elliptic unit $u_{\psi}$ \[ L_{\mathfrak p}(K,\psi)(\chi_{\triv}) = -(1-\psi^{-1}(\bar {\mathfrak p}))(1-\psi(\mathfrak p)p^{-1}) \cdot \log_p(u_{\psi}). \] In Section \ref{katz} we properly discuss the main features of Katz's two-variable $p$-adic $L$-function.

\end{enumerate}

As a by-product of the previous discussion, along the text we also deal with other instances of the exceptional zero phenomenon. The results of Section 4 encompass two main situations: the exceptional vanishing of $\kappa_{\psi \mathcal N}$; and the exceptional vanishing of the $p$-adic $L$-function $L_{\mathfrak p}(K,\psi)$, which is a more well-established phenomenon that has been widely studied in the literature and already appears in Katz's original work.

\vskip 12pt

Once these results have been developed, the last section of the article serves to analyze how our results fit with similar statements concerning exceptional zero phenomena. In particular, we emphasize the parallelism, but also the differences, with the theory of Heegner points, as well as the fact that these elliptic units may be seen as a particular case inside the theory of Beilinson--Flach elements, where different instances of the exceptional zero phenomena also appear. When $g$ is a theta series of an imaginary quadratic field where $p$ splits and we take the pair of modular forms $(g,g^*)$, \cite{RiRo} describes a connection between a {\it derived} Beilinson--Flach element, an elliptic unit and an special value of the Hida--Rankin $p$-adic $L$-function attached to $(g,g^*)$. The assumptions considered in loc.\,cit. (we had imposed that the Galois representation attached to $g$ was $p$-distinguished) excluded the possibility of elliptic units presenting an {\it exceptional zero}, so in a certain way the results of this article regarding exceptional zeros of elliptic units can be thought as a degenerate case inside the theory of Beilinson--Flach elements.
While our main theorem can be seen as the counterpart of \cite[Theorem B]{RiRo} in the framework of elliptic units, we point out that there is another exceptional zero phenomenon related to the vanishing of the numerator of the Perrin-Riou map, which in this case leads to a trivial zero of the Katz's two variable $p$-adic $L$-function (see Section 4.1) and which in the setting of Beilinson--Flach elements has been studied in \cite{LZ2}.

\vskip 12pt

{\bf Acknowledgements.}  It is a pleasure to sincerely thank Victor Rotger, who suggested me to work on this question and carefully read an earlier version of this manuscript. I also thank Kazim B\"uy\"ukboduk for useful comments and correspondence regarding his work, and Francesc Castell\`a for many enlightening conversations and helpful remarks during the writing of this note. Finally, it is a must to thank the anonymous referees for a very careful reading of the manuscript, whose comments notably contributed to improve the exposition and corrected several inaccuracies.

The author has been supported by Grant MTM2015-63829-P, as well as from the European Research Council (ERC) under the European Union's Horizon 2020 research and innovation programme (grant agreement No. 682152). The author has also received financial support through ``la Caixa" Fellowship Grant for Doctoral Studies (grant LCF/BQ/ES17/11600010).

\section{Circular units}\label{circulars}

Circular units constitute one of the first examples of Euler systems, and they play a key role in the proof of the classical Iwasawa main conjecture. We recall here some of their most relevant features because of the parallelism they keep with the theory of elliptic units. We discuss what the exceptional zero phenomenon represents in this case, and then we will compare this setting with that of elliptic units.

\subsection{Leopoldt's formula}

In this section, we denote by $\Lambda_{\cyc} := \mathbb Z_p[[\mathbb Z_p^{\times}]]$, and let $\mathcal W:=\Spf(\Lambda_{\cyc})$.
We fix a primitive, non-trivial even Dirichlet character of conductor $N$, \[ \chi: (\mathbb Z/N\mathbb Z)^{\times} \rightarrow F^{\times} \] where $F$ is a number field and $(p,N)=1$. We write $F_p$ for its completion at a prime lying above $p$. For our applications to exceptional zero phenomena, we are interested in the case in which $\chi(p)=1$.

The Kubota--Leopoldt $p$-adic $L$-function attached to $\chi$, $L_p(\chi,s)$, can be defined as the $p$-adic analytic function satisfying the interpolation property \[ L_p(\chi,n) = (1-\chi(p)p^{-n}) L(\chi,n), \quad \text{ for all } n \leq 0. \] Alternatively, we may understand it as a function defined over an appropriate rigid analytic space, sometimes called the {\it weight space}.

\begin{defi}
A {\it classical point} of $\mathcal W$ is a pair $(k,\xi)$, where $k$ is an integer and $\xi$ is a Dirichlet character of $p$-power conductor, corresponding to the homomorphism \[ z \mapsto z^{k-1} \xi(z). \]
\end{defi}

Then, the Kubota--Leopoldt $p$-adic $L$-function can be seen as an application \[ L_p(\chi, \cdot): \quad \mathcal W \longrightarrow \mathbb C_p \] defined in terms of an interpolation property for a subset of classical points. We warn the reader that there are several possible conventions regarding this function. Here, we closely follow the approach of \cite[Section 3]{PR}, and the $p$-adic $L$-function we have considered satisfies the interpolation property of Proposition 3.1.4 of loc.\,cit. See also \cite{BCDDPR} for a reformulation of those ideas in our language. Another standard way of presenting this $p$-adic $L$-function is discussed in \cite[Section 3]{Das}, and we will come back to this issue later on; there, the interpolation property involves appropriate twists by powers of the Teichm\"uller character, but both approaches are closely connected as shown in \cite[Section 3.1.5]{PR}: in particular, the $p$-adic $L$-values at integers $n$ with $n \equiv 1$ modulo $p-1$ agree.

Let $H$ denote the field cut out by $\chi$, and for a choice of a primitive $p^n$-th root of unity $\zeta_{p^n}$, let $H_n = H(\zeta_{p^n})$. Define the units
\begin{equation}
c_{\chi,n} := \prod_{a=1}^{N-1} (1-\zeta_{Np^n}^a)^{\chi^{-1}(a)} \in (\mathcal O_{H_n}^{\times} \otimes F)^{\chi},
\end{equation}
that behave under the norm maps as dictated by the theory of Euler systems: \[ \mathcal N_{H_n}^{H_{n+1}}(c_{\chi,n+1}) = \begin{cases} c_{\chi,n} & \text{ if } n \geq 1, \\  c_{\chi} \otimes (\chi(p)-1) & \text{ if } n=0. \end{cases}, \]
where $c_{\chi} = c_{\chi,0}$. As a word of caution, note that we have used the standard multiplicative notation, where the exponentiation $(1-\zeta_{Np^n}^a)^{\chi^{-1}(a)}$ means $(1-\zeta_{Np^n}^a) \otimes \chi^{-1}(a)$.

Hence, one can construct a norm compatible family of cohomology classes taking the image under the Kummer map $\delta$. More precisely, we consider
\begin{equation}\label{im-kummer}
\kappa_{\chi,n} := \delta (c_{\chi,n}) \in H^1(H_n, F_p(1))^{\chi} = H^1(H_n, F_p(\chi^{-1})(1)).
\end{equation}

These classes can be patched all together taking the projective limit for $n \geq 1$, resulting in an element $\kappa_{\chi,\infty}$
\begin{equation}\label{im-patching}
\kappa_{\chi,\infty} \in \lim_{\leftarrow} H^1(H_n, F_p(\chi^{-1})(1)) = H^1(\mathbb Q, \Lambda_{\cyc} \otimes F_p(\chi^{-1})(1)).
\end{equation}
As usual, $\Lambda_{\cyc}$ can be understood as a $p$-adic interpolation of the Tate twists, parameterized by pairs $(k,\xi)$, where $k \in \mathbb Z$ and $\xi$ is a Dirichlet character of $p$-power conductor. Hence, we are endowing $\Lambda_{\cyc}$ with the {\it tautological} action of $G_{\mathbb Q}$ (this is sometimes written in the literature as $\Lambda_{\cyc}(\underline{\varepsilon}_{\cyc})$ to emphasize the Galois action, where $\underline{\varepsilon}_{\cyc}$ is the so-called $\Lambda_{\cyc}$-adic cyclotomic character).
Let $F_{\xi,p}$ stand for the compositum of $F_p$ with the field of values of $\xi$. The specialization maps $\nu_{k,\xi}: \Lambda_{\cyc} \rightarrow F_{\xi,p}$ are ring homomorphisms sending the group-like element $a \in \mathbb Z_p^{\times}$ to $a^{k-1} (\chi \xi)^{-1}(a)$, and induce $G_{\mathbb Q}$-equivariant specialization maps \[ \nu_{k, \xi}: \Lambda_{\cyc} \rightarrow F_{\xi,p}(\xi^{-1})(k-1). \] This gives rise to a collection of global cohomology classes
\begin{equation}\label{im-classes}
\kappa_{k,\chi \xi} := \nu_{k,\xi}(\kappa_{\chi,\infty}) \in H^1(\mathbb Q, F_{\xi,p}(\chi \xi)^{-1}(k)).
\end{equation}

In order to state the following result, recall that the Gauss sum associated to a Dirichlet character $\eta$ of conductor $m$ and with values in a number field $L$ is defined by
\begin{equation}\label{gauss}
\mathfrak g(\eta) = \sum_{a=1}^{m-1} \zeta_m^a \otimes \eta(a) \in \mathcal O_{\mathbb Q(\zeta_m)}^{\times} \otimes F.
\end{equation}
From now on, $\exp_{\BK}^*$ stands for the Bloch--Kato dual exponential map and $\log_{\BK}$ for the Bloch--Kato logarithm. The following proposition is a reformulation of a classical result by Coleman \cite{Col}, using the formalism of Perrin-Riou regulators developed in \cite{PR}.

\begin{propo}[Coleman, Perrin-Riou]\label{perrin}
There exists a morphism of $\Lambda$-modules (referred to as the Coleman or the Perrin-Riou map) \[ \mathcal L_p: H^1(\mathbb Q_p, \Lambda_{\cyc} \otimes_{\mathbb Z_p} F_p(\chi^{-1})(1)) \longrightarrow I^{-1} \Lambda_{\cyc} \] satisfying that for all classical points $(k,\xi)$, the specialization map $\nu_{k,\chi \xi}(\mathcal L_p)$ is the homomorphism \[ \nu_{k,\chi \xi}(\mathcal L_p): H^1(\mathbb Q_p,F_{\xi,p}(\chi \xi)^{-1}(k)) \longrightarrow \mathbb D_{\dR}(F_{\xi,p}((\chi \xi)^{-1})(k)) \simeq F_{\xi,p} \] given by \[  \nu_{k,\chi \xi}(\mathcal L_p) = \frac{1}{\mathfrak g((\chi \xi)^{-1})} \cdot \frac{1-\chi \xi(p)p^{-k}}{1-(\chi \xi)^{-1}(p)p^{k-1}} \cdot \begin{cases} \frac{(-t)^k}{(k-1)!} \log_{\BK} & \text{ if } k \geq 1 \\ (-k)!t^k \exp_{\BK}^* & \text{ if } k<1, \end{cases} \]
where $t$ is Fontaine's $p$-adic analogue of $2\pi i$, and the target of both the Bloch--Kato logarithm and the dual exponential map is identified with $F_{\xi,p}$. Here, $I$ is the kernel of the specialization at $\nu_{1,\chi}$.
\end{propo}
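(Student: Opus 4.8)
The plan is to realize $\mathcal L_p$ as the cyclotomic Coleman map attached to the one-dimensional crystalline representation $V = F_p(\chi^{-1})(1)$, and to read off the interpolation formula from Perrin-Riou's explicit reciprocity law. The key simplification is that, since the conductor $N$ of $\chi$ is prime to $p$, the restriction of $\chi$ to $G_{\mathbb Q_p}$ is unramified and determined by $\chi(p) = \chi(\Frob_p)$. Hence $V$ is a twist of $\mathbb Q_p(1)$ by an unramified character, $\mathbb D_{\cris}(V)$ is one-dimensional over $F_p$, and the crystalline Frobenius acts by the scalar $\chi^{-1}(p)p^{-1}$. This is precisely the regime in which Coleman's theory of power series for $\mathbb G_m$ along the cyclotomic tower applies, after untwisting by the unramified character.

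First I would construct the map. By definition, the source $H^1(\mathbb Q_p, \Lambda_{\cyc}\otimes_{\mathbb Z_p} F_p(\chi^{-1})(1))$ is the Iwasawa cohomology $H^1_{\Iw}(\mathbb Q_p, V)$, and to a norm-compatible family of units $(u_n)$ Coleman's theorem attaches a power series $g(T)\in\mathcal O_{F_p}[[T]]^\times$ characterized by $g(\zeta_{p^n}-1)=u_n$. I would then set
\[ \mathcal L_p = \mathfrak{M}^{-1}\!\left( \left(1 - \frac{\varphi}{p}\right)\log g \right), \]
where $\varphi$ acts by $f(T)\mapsto f((1+T)^p-1)$ and $\mathfrak{M}$ is the Amice--Mellin transform identifying measures, respectively distributions, on $\Gamma$ with power series. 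The normalizing operator $1-\varphi/p$ removes the unit ambiguity in $g$ and is responsible for the numerator Euler factor; the fact that the output is a priori a distribution with a controlled pole, rather than a measure, accounts for the target $I^{-1}\Lambda_{\cyc}$, with $I=\ker(\nu_{1,\chi})$ recording the single place where the denominator factor $1-(\chi\xi)^{-1}(p)p^{k-1}$ can vanish, namely $(k,\xi)=(1,\triv)$ when $\chi(p)=1$. The $\Lambda$-linearity is built into the construction, since $\Gamma$ acts on power series through the tautological action and $\mathfrak M$ is equivariant.

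Next I would verify the interpolation. To evaluate $\nu_{k,\chi\xi}(\mathcal L_p)$ I would apply the differential operator $\partial^{k-1}$, with $\partial=(1+T)\frac{d}{dT}$, to $(1-\varphi/p)\log g$ and then twist by $\xi$, i.e. take the $\xi^{-1}$-weighted sum of evaluations at the primitive $p^m$-th roots of unity minus $1$. This produces exactly three kinds of factors: the Gauss sum $\mathfrak g((\chi\xi)^{-1})$ from the finite-order twist; the Euler factor $\frac{1-\chi\xi(p)p^{-k}}{1-(\chi\xi)^{-1}(p)p^{k-1}}$ from the interplay of the $1-\varphi/p$ normalization with the Frobenius eigenvalue $\chi^{-1}(p)p^{-1}$; and the de Rham comparison constants $(-t)^k/(k-1)!$, respectively $(-k)!\,t^k$, coming from $\partial^{k-1}$ and the identification of filtrations. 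The identification of the resulting value with $\log_{\BK}$ for $k\geq 1$ and with $\exp^*_{\BK}$ for $k<1$ is Perrin-Riou's explicit reciprocity law specialized to $V$; the Kummer-theoretic description of the classes in \eqref{im-kummer} makes the comparison with $\mathbb D_{\dR}(V)\simeq F_{\xi,p}$ transparent.

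The main obstacle, and the only genuinely deep input, is this last reciprocity law: matching the Coleman/Perrin-Riou map to the Bloch--Kato logarithm and dual exponential with precisely the stated Euler factors, factorials, and powers of $t$. Everything else — the construction of $g$, the $\Lambda$-linearity, and the bookkeeping of the Gauss sum — is formal or classical, but the exact constants are convention-sensitive, so I would expect the bulk of the work to lie in normalizing signs and powers of $t$ consistently with \cite{PR} and in confirming that the pole of $\mathcal L_p$ is confined to $I^{-1}$.
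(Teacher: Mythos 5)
Your proposal is correct and takes essentially the same route as the paper: the paper gives no independent proof of Proposition \ref{perrin}, presenting it as a reformulation of Coleman's classical result \cite{Col} in the formalism of Perrin-Riou regulators from \cite[Section 3]{PR}, which is exactly the construction you unpack (Coleman power series, the $1-\varphi/p$ normalization, the Amice--Mellin transform, and the interpolation computation yielding the Gauss sum, Euler factors, and de Rham constants). The convention-sensitive normalizations you flag are resolved in the paper simply by matching the interpolation property of \cite[Proposition 3.1.4]{PR}.
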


We finally relate the image of the previously introduced class $\kappa_{\chi,\infty}$ under the Perrin-Riou regulator with the Kubota--Leopoldt $p$-adic $L$-function. See for instance \cite{PR} for a more detailed treatment of this result. Here, $\loc_p$ stands for the localization at $p$ of a global cohomology class.

\begin{theorem}\label{rec-kub}
Let $\chi$ stand for a non-trivial and even Dirichlet character. Then, the cohomology class $\kappa_{\chi,\infty} \in H^1(\mathbb Q, \Lambda_{\cyc} \otimes_{\mathbb Z_p} F_p(\chi^{-1})(1))$ introduced in \eqref{im-patching} satisfies \[ L_p(\chi, \cdot) = \mathcal L_p(\loc_p(\kappa_{\chi,\infty})). \]
\end{theorem}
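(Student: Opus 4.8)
The plan is to prove the identity by comparing the two sides as elements of $I^{-1}\Lambda_{\cyc}$ through their specializations at a Zariski-dense set of classical points. Both sides do define such elements: the right-hand side because $\mathcal L_p$ is a morphism of $\Lambda$-modules (Proposition \ref{perrin}) applied to the well-defined local class $\loc_p(\kappa_{\chi,\infty})$, and the left-hand side because $L_p(\chi,\cdot)$ is the Iwasawa function attached to $\chi$ (in fact lying in $\Lambda_{\cyc}$, as $\chi$ is non-trivial). Since an element of $I^{-1}\Lambda_{\cyc}$ is determined by its values at any Zariski-dense subset of $\mathcal W$ of points at which it is regular, and since the classical points $(k,\xi)$ with $k \leq 0$ form such a subset, it suffices to check the equality $\nu_{k,\chi\xi}(L_p(\chi,\cdot)) = \nu_{k,\chi\xi}(\mathcal L_p(\loc_p(\kappa_{\chi,\infty})))$ for all such points. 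The range $k \leq 0$ is the convenient one, since there both the interpolation property defining $L_p(\chi,\cdot)$ and the dual-exponential branch of Proposition \ref{perrin} are available.

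First I would unwind the right-hand side at such a point. By $\Lambda$-linearity of $\mathcal L_p$ and compatibility with specialization, \[ \nu_{k,\chi\xi}\bigl(\mathcal L_p(\loc_p(\kappa_{\chi,\infty}))\bigr) = \nu_{k,\chi\xi}(\mathcal L_p)\bigl(\loc_p(\kappa_{k,\chi\xi})\bigr), \] where $\kappa_{k,\chi\xi} = \nu_{k,\xi}(\kappa_{\chi,\infty})$ is the specialized global class of \eqref{im-classes}. For $k \leq 0$, Proposition \ref{perrin} evaluates the right-hand side as \[ \frac{1}{\mathfrak g((\chi\xi)^{-1})}\cdot \frac{1-\chi\xi(p)p^{-k}}{1-(\chi\xi)^{-1}(p)p^{k-1}}\cdot (-k)!\, t^k\, \exp_{\BK}^*\bigl(\loc_p(\kappa_{k,\chi\xi})\bigr). \] Everything thus reduces to an explicit evaluation of the Bloch--Kato dual exponential of the localization at $p$ of the cyclotomic-unit class.

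The heart of the argument is then Coleman's explicit reciprocity law, computing $\exp_{\BK}^*(\loc_p(\kappa_{k,\chi\xi}))$ via the Coleman power series of the norm-compatible family $(c_{\chi,n})_n$. Concretely, one attaches to the units $1-\zeta_{Np^n}^a$ their interpolating Coleman series, takes its logarithmic derivative, and identifies the resulting quantity with the complex special value $L(\chi\xi,k)$ up to the Gauss sum $\mathfrak g((\chi\xi)^{-1})$ and a power of $t$. Substituting back, the Gauss sum and the power of Fontaine's period cancel, and the surviving factors reproduce exactly the interpolation property $\nu_{k,\chi\xi}(L_p(\chi,\cdot)) = (1-\chi\xi(p)p^{-k})L(\chi\xi,k)$ of the Kubota--Leopoldt $p$-adic $L$-function; the denominator $1-(\chi\xi)^{-1}(p)p^{k-1}$ is absorbed by the local behaviour of the class, which is itself dictated by the norm relations for the family $(c_{\chi,n})_n$ recorded above (in particular the factor $\chi(p)-1$ at level $n=0$). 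Equality at every classical point $(k,\xi)$ with $k \leq 0$ then upgrades, by density, to the asserted identity in $I^{-1}\Lambda_{\cyc}$.

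The main obstacle is precisely this last computation: establishing the explicit reciprocity law for the dual exponential of the cyclotomic class and checking that all the auxiliary normalizations---the Gauss sum, the power $t^k$ of Fontaine's period, the factorial $(-k)!$, and the two Euler factors---conspire to leave exactly the interpolated $L$-value. The density step is routine; all the genuine arithmetic content is concentrated in this matching, which is where one uses the theory of Coleman power series (and, within Perrin-Riou's formalism, the very construction of $\mathcal L_p$). An equivalent alternative would sidestep the pointwise complex $L$-values and instead identify $\mathcal L_p(\loc_p(\kappa_{\chi,\infty}))$ with the Kubota--Leopoldt measure directly, by recognizing the Coleman map as the logarithmic-derivative / Mellin transform of the Coleman power series of $(c_{\chi,n})_n$ and appealing to Iwasawa's original construction of $L_p(\chi,\cdot)$.
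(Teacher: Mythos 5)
Your proposal is correct and is essentially the same argument the paper relies on: the paper gives no proof of Theorem \ref{rec-kub} itself, deferring to \cite{PR} (and Coleman \cite{Col}), and the proof recorded there proceeds exactly as you outline---reduction to the Zariski-dense set of classical points in the dual-exponential range $k\leq 0$ via the specialization property of Proposition \ref{perrin}, with all the arithmetic content carried by Coleman's explicit reciprocity law applied to the Coleman power series of the norm-compatible family $(c_{\chi,n})_n$. Your closing alternative---identifying $\mathcal L_p(\loc_p(\kappa_{\chi,\infty}))$ directly with the Kubota--Leopoldt measure via the logarithmic derivative of the Coleman power series---is Iwasawa's classical theorem, which is the form in which the cited reference actually packages the result.
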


The previous theorem can be seen as an equality in $\Lambda_{\cyc}$, and we may apply the specialization maps to both sides at any $\mathbb C_p$-valued point. From the previous results, and using Kummer's identifications again, it turns out that one has the equality
\begin{equation}
L_p(\chi \xi,1) = -\frac{(1-\chi \xi(p)p^{-1})}{(1-(\chi \xi)^{-1}(p))} \cdot \frac{\log_p(\loc_p(\kappa_{1,\chi \xi}))}{\mathfrak g((\chi \xi)^{-1})},
\end{equation}
whenever $(\chi \xi)(p) \neq 1$; if $(\chi \xi)(p)=1$, both the Euler factor in the denominator and the cohomology class in the numerator vanish. Recall that here we have identified $t \cdot \log_{\BK}$ with the Iwasawa $p$-adic logarithm.

Since $\chi$ is non-trivial, one has $L_p(\chi,1) \in \bar{\mathbb Q}_p^{\times}$. This suggests the existence of a {\it derived} cohomology class $\kappa_{1,\chi}'$ related with $L_p(\chi,1)$, which is the content of the following section. We recover this idea along the article, but anyway it is good to keep in mind that in this setting one also has a $p$-adic Kronecker's limit formula expressing the value of $L_p(\chi,1)$ in terms of a unit in the number field cut out by the character
\begin{equation}\label{circular}
L_p(\chi,1) = -\frac{(1-\chi(p)p^{-1})}{\mathfrak g(\chi^{-1})} \cdot \log_p \Big( \prod_{a=1}^{N-1} (1-\zeta^a)^{\chi^{-1}(a)} \Big).
\end{equation}
This result is generally due to Leopoldt (see also \cite{PR}), and is often called in the literature {\it Leopoldt's formula}.
The quantity $ \prod_{a=1}^{N-1} (1-\zeta^a)^{\chi^{-1}(a)}$ is typically referred as the {\it circular unit} attached to $\chi$ and we have denoted it by $c_{\chi}$.

As we have pointed out, we may instead consider a slightly different $p$-adic $L$-function, that we denote $L_{p,1}(\chi,n)$, and defined in terms of the interpolation property \[ L_{p,1}(\chi,n) = (1-\chi \omega^{n-1}(p)p^{-n}) \cdot L(\chi \omega^{n-1},n) \quad \text{ for all } n \leq 0, \] where $\omega$ stands for the modulo $p$ cyclotomic character (Teichm\"uller). A very interesting object of study in the theory of $\mathcal L$-invariants is $L_{p,1}'(\chi,0)$, in the case where $\chi \omega^{-1}(p)=1$ and therefore $L_{p,1}(\chi,0)=0$ due to an exceptional vanishing. Washington \cite{Was} provides a formula for the value of the derivative in terms of Morita's $p$-adic Gamma function, $\Gamma_p(x)$:
\begin{equation}\label{morita}
L_{p,1}'(\chi,0) = \log_p \Big( \prod_{a=1}^N \Gamma_p(a/N)^{\chi \omega^{-1}(a)} \Big) + L_{p,1}(\chi,0) \log_p(N).
\end{equation}
Hence, in the situation of {\it exceptional vanishing} $\chi \omega^{-1}(p)=1$, there is an exceptional zero for $L_{p,1}(\chi,s)$ at $s=0$ and one has that
\begin{equation}\label{circ-exc}
L_{p,1}'(\chi,0) = \log_p(v_{\chi}),
\end{equation}
where \[ v_{\chi} = \prod_{a=1}^N \Gamma_p(a/N)^{\chi \omega^{-1}(a)}. \]

In the case where one considers instead an {\it odd} Dirichlet character $\eta$ with $\eta(p)=1$, the determination of $L_{p,1}'(\eta \omega,0)$ is a particular case of Gross' conjectures, as studied first by Ferrero--Greenberg \cite{FG}, and then by Darmon--Dasgupta--Pollack (among others!) for arbitrary totally real fields. Here, this derivative is expressed in terms of the logarithm of a $p$-unit in the field cut out by the character.

\subsection{Exceptional zeros and circular units}

Suppose from now on that $\chi$ is a non-trivial, even, Dirichlet character of conductor $N$ with $\chi(p)=1$ and $(p,N)=1$. Then, the arguments of the previous section show that the specialization of the $\Lambda_{\cyc}$-adic class $\kappa_{\chi,\infty}$ at $\chi$ vanishes, that is, $\kappa_{1,\chi}=0$. Of course, this can be interpreted in terms of the vanishing of the denominator of the Perrin-Riou map. This section, where no claim of originality is made, explains how to obtain a formula for $L_p(\chi,1)$ involving $\kappa_{\chi,\infty}$, following for that the work of Solomon \cite{Sol} and B\"uy\"ukboduk \cite{Buy}, and also discusses how the vanishing of the numerator of the Perrin-Riou regulator at $s=0$ can be studied inside the framework developed in \cite{Ven}.

With the previous notations, let $T = F_p(\chi^{-1})(1)$ and $T^*= F_p(\chi)$, viewed as representations of $G_{\mathbb Q}$. We single out one of the $(p-1)$ connected components of $\mathcal W$, which corresponds to the choice of the residue class of 1 modulo $p-1$ and of the Iwasawa algebra $\Lambda = \mathbb Z_p[[1+p\mathbb Z_p]] \subset \mathbb Z_p[[\mathbb Z_p^{\times}]]$. After fixing a topological generator $\gamma$ of $1+p\mathbb Z_p$, one may consider the short exact sequence of $\mathbb Z_p$-modules \[ 0 \rightarrow \Lambda \otimes T \xrightarrow{\gamma-1} \Lambda \otimes T \rightarrow T \rightarrow 0 \] which induces a long exact sequence in cohomology. Since $H^0(\mathbb Q, T)=0$, \[ 0 \rightarrow H^1(\mathbb Q, \Lambda \otimes T) \xrightarrow{\gamma-1} H^1(\mathbb Q, \Lambda \otimes T) \xrightarrow{\mathcal N} H^1(\mathbb Q, T). \]

The image of $\kappa_{\chi,\infty}$ under the map $\mathcal N$ vanishes since $\chi(p)=1$, and hence there exists a unique \[ \kappa_{\chi,\infty}' \in H^1(\mathbb Q,\Lambda \otimes T) \] such that \[ \frac{\gamma-1}{\log_p (\gamma)} \cdot \kappa_{\chi,\infty}'  = \kappa_{\chi,\infty}. \]
The reason of normalizing by $\log_p(\gamma)$ is, as discussed in \cite[Section 3]{Buy}, that the {\it derived} class does not longer depend on the choice of the topological generator $\gamma$.

Summing all up, we have the following result. We refer the reader to \cite[Proposition 3.4]{Buy} for a more detailed discussion.
\begin{propo}\label{exc-circ}
If $\chi(p)=1$, the class $\kappa_{\chi,\infty}$ vanishes at the character $\chi$ and there exists a {\it derived} cohomology class $\kappa_{\chi,\infty}' \in H^1(\mathbb Q,\Lambda \otimes T)$ such that \[ \kappa_{\chi,\infty} = \frac{\gamma-1}{\log_p (\gamma)} \cdot \kappa_{\chi,\infty}'. \]
\end{propo}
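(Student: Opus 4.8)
The plan is to derive both halves of the statement from the four-term exact sequence displayed above, combined with the degenerate norm relation satisfied by the circular units at the bottom of the cyclotomic tower; the two assertions then separate cleanly, the vanishing $\kappa_{1,\chi}=0$ carrying all the arithmetic content and the production of the derived class being essentially homological algebra. First I would record the structure of the sequence: since $H^0(\mathbb Q,T)=0$, the connecting map forces $\gamma-1$ to be injective on $H^1(\mathbb Q,\Lambda\otimes T)$, and exactness at the middle term gives $\ker(\mathcal N)=(\gamma-1)\cdot H^1(\mathbb Q,\Lambda\otimes T)$. Everything then reduces to showing that $\kappa_{\chi,\infty}$ lies in this kernel.

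The crux is therefore the vanishing $\mathcal N(\kappa_{\chi,\infty})=\kappa_{1,\chi}=0$. I would identify $\mathcal N$ with the specialization $\nu_{1,\triv}$ at the trivial character of $1+p\mathbb Z_p$, so that $\mathcal N(\kappa_{\chi,\infty})$ is the descent of the universal norm-compatible family to the base. Since the Kummer map $\delta$ is Galois-equivariant and commutes with corestriction, this descent is governed by the degenerate bottom norm relation recalled above, $\mathcal N_{H_0}^{H_1}(c_{\chi,1})=c_\chi\otimes(\chi(p)-1)$. The hypothesis $\chi(p)=1$ annihilates the Euler factor $\chi(p)-1$, whence $\kappa_{1,\chi}=\delta\big(c_\chi\otimes(\chi(p)-1)\big)=0$. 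One may cross-check this against Theorem \ref{rec-kub}, where at $(k,\xi)=(1,\triv)$ the denominator $1-\chi^{-1}(p)p^{k-1}$ of the Perrin--Riou map degenerates and the finiteness of $L_p(\chi,1)\in\bar{\mathbb Q}_p^\times$ forces the numerator to vanish; I prefer the norm-relation argument, which yields the \emph{global} vanishing directly rather than only that of $\loc_p(\kappa_{1,\chi})$.

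Granting $\mathcal N(\kappa_{\chi,\infty})=0$, exactness supplies a preimage $\kappa'\in H^1(\mathbb Q,\Lambda\otimes T)$ with $(\gamma-1)\kappa'=\kappa_{\chi,\infty}$, unique by the injectivity of $\gamma-1$, and setting $\kappa_{\chi,\infty}':=\log_p(\gamma)\cdot\kappa'$ yields exactly the asserted identity $\kappa_{\chi,\infty}=\tfrac{\gamma-1}{\log_p(\gamma)}\cdot\kappa_{\chi,\infty}'$. The one point demanding genuine care — and where I expect to appeal to \cite[Section 3]{Buy} rather than verify everything by hand — is the independence of the normalization on the choice of $\gamma$: writing $\gamma'=\gamma^{u}$ with $u\in\mathbb Z_p^\times$ gives $\gamma'-1=v\cdot(\gamma-1)$ for a unit $v\in\Lambda^\times$ with $v\equiv u$ modulo the augmentation ideal, so the two candidate derived classes differ by $(uv^{-1}-1)\kappa'$. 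The factor $\log_p(\gamma)$ is precisely what makes the augmentation-level terms agree, forcing this discrepancy into $\ker(\mathcal N)=(\gamma-1)H^1(\mathbb Q,\Lambda\otimes T)$, so that the specialization of $\kappa_{\chi,\infty}'$ used in the sequel is well defined; verifying that this cancellation is exact, and not merely up to higher-order terms, is the subtle step of the argument.
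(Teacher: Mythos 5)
Your proposal is correct and takes essentially the same route as the paper: the same short exact sequence $0 \rightarrow \Lambda \otimes T \xrightarrow{\gamma-1} \Lambda \otimes T \rightarrow T \rightarrow 0$, the vanishing of $\kappa_{1,\chi}=\mathcal N(\kappa_{\chi,\infty})$ forced by the Euler factor $\chi(p)-1$ in the bottom norm relation, and the normalization by $\log_p(\gamma)$ with the appeal to \cite[Section 3]{Buy} for independence of the generator. Your added cross-check via the Perrin-Riou map and the explicit discussion of the $\gamma$-independence are consistent with, but not required beyond, what the paper does.
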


\begin{remark}
It may be tempting to look for a relation between $\kappa_{1,\chi}'$ and the special value $L_p(\chi,1)$. However, the fact that the Euler factor $1-p^{k-1}$ is not analytic in the variable $k$ precludes the possibility of directly taking derivatives in the reciprocity law of Proposition \ref{perrin}. However, this can be remedied invoking Solomon's results, as we later see, connecting the order of the derived class with the special value at $s=1$. In the following sections we discuss how in a bigger weight space certain derivatives are related with the $p$-adic logarithm and others with the $p$-adic valuation.
\end{remark}

Let us provide a more explicit description of the previous result. In \cite{Buy} the author makes a connection between the value of $L_p(\chi,1)$ and Nekovar's pairings. In \cite[Corollary 2.11]{Buy} it is shown that \[ H_{\fin,p}^1(\mathbb Q,T) = (\mathcal O_H^{\times}[1/p])^{\chi} \otimes F_p, \]
is a two-dimensional space where we may explicitly construct a basis. Here, $\tilde H_{\fin}^1(\mathbb Q, T)$ stands for the Bloch--Kato Selmer group of classes which are unramified outside $p$ and de Rham at $p$. As before, we have written $H$ for the field cut out by $\chi$. The fact that this space is two-dimensional reflects the exceptional zero coming from the condition $\chi(p)=1$, which gives rise to an {\it extra} $p$-unit in the field cut out by the character.

Define \[ c_n = \mathcal N_{\mathbb Q(\zeta_{Nn})/H(\zeta_n)}(1-\zeta_{Nn}) \in (\mathcal O_{H(\zeta_n)}^{\times} \otimes F), \] and consider its $\chi$-part, $c_{\chi,n}$. The element $c_{\chi}:=c_{\chi,1}$ is called the {\it tame cyclotomic unit}, and agrees with the definition given in the previous section.
For a finite abelian extension $H'$ of $\mathbb Q$ of conductor $m$ we also define \[ \xi_{H'} = \mathcal N_{\mathbb Q(\zeta_{mp})/H'}(1-\zeta_{mp}). \] With a slight abuse of notation, we may identify the units with the cohomology classes obtained via the Kummer map. Then, it turns out that the collection \[ \xi = \xi_{\chi,\infty} := \{ e_{\chi} \xi_{H_n} \text{ for } n \geq 1 \} \in \lim_{\leftarrow} H^1(H_n, T), \] where for the sake of simplicity we have written $e_{\chi}$ for the $\chi$-projector, satisfies the Euler system distribution relation, and moreover $\xi_H = 1$. Proceeding as before, we obtain an element $z_{\chi, \infty}$ satisfying \[ \frac{\gamma-1}{\log_p(\gamma)} \times z_{\chi, \infty} = \xi. \] We call its bottom layer $z_{\chi} := z_{0,\chi} \in H^1(\mathbb Q, T)$ the {\it cyclotomic $p$-unit}, and $\{c_{\chi},z_{\chi}\}$ is a basis of $H_{\fin,p}^1(\mathbb Q,T)$. In \cite{Sol}, it is proved that $\log_p(c_{\chi}) = \ord_p(z_{\chi}) \in F_p$, where $\ord_p$ is the usual $p$-adic valuation. Of course, this depends on the choice of a prime of $F$ lying above $p$.

The interesting fact appears when the denominator of the Perrin-Riou map vanishes. To circumvent that problem, \cite[Section 6.1]{Buy} recasts to the principle of improved Perrin-Riou map, which allows to introduce a {\it primitive} $p$-adic $L$-function $\tilde L_p(\chi,s)$ vanishing at $s=1$. The main result of \cite{Buy} is the computation, via the theory of Nekovar's pairings, of a formula for $L_p(\chi,1)$, which asserts that
\begin{equation}
\tilde L_p'(\chi,1) = p \cdot L_p(\chi,1)= \frac{1-p}{\mathfrak g(\chi^{-1})} \times \log_p(c_{\chi}) = \frac{1-p}{\mathfrak g(\chi^{-1})} \times \ord_p(z_{\chi}).
\end{equation}

This also works for the case of an imaginary quadratic field when one only consider the $\mathbb Z_p$-extension which is ramified just over a fixed prime $\mathfrak p$ above $p$.

For the sake of completeness, we finish the section by analyzing what happens for $L_{p,1}(\chi,0)$, where we may follow the approach of \cite[Section 3]{Ven} to analyze the vanishing of the numerator in the Perrin-Riou map. To ease notations, let $\psi = \chi \omega^{-1}$ and write again $F$ for its field of values. In particular, we know that when $\psi(p)=1$, $L_{p,1}(\chi,0)=0$.


Since the numerator of the Perrin-Riou regulator vanishes, we can consider its derivative. Let $I$ stand for the augmentation ideal of $\Lambda$. Then, we define the {\it derivative} of the Perrin-Riou map $\mathcal L_p$ of Proposition \ref{perrin} as the application \[ \mathcal L_p': H^1(\mathbb Q_p, \Lambda \otimes_{\mathbb Z_p} F_p(\psi^{-1})(1)) \rightarrow I/I^2, \] i.\,e.\,, the composition of $\mathcal L_p$ with the projection $\{ \cdot \}: I \rightarrow I/I^2$.

Let $\kappa_{\psi}=(\kappa_{n,\psi})$ be the cohomology class we have previously introduced in \eqref{im-classes}. Following the same strategy as in \cite[Prop. 3.6]{Ven}, and identifying $\kappa_{0,\psi}$ with an element in $\Hom(\mathbb Q_p^{\times},\mathbb Q_p) \otimes F_p(\psi^{-1})$, one has that \[ \mathcal L_p'(\kappa_{\psi}) = -\mathfrak g(\psi^{-1})^{-1}(1-p^{-1})^{-1} \cdot \frac{\exp_{\BK}^*(\loc_p(\kappa_{0,\psi}))}{\log_p(\gamma)} \cdot \{\gamma\}, \] where we have identified $I/I^2$ with the multiplicative group $1+p\mathbb Z_p$. As in \cite[Section 5]{Ven}, we can relate the derivative of $\mathcal L_p$ with the derivative of $L_{p,1}(\chi,s)$ and obtain this way a formula for $L_{p,1}'(\chi,0)$ in terms of $\exp_{\BK}^*(\loc_p(\kappa_{0,\psi}))$.

\section{Elliptic units}

In this section we introduce Katz's two-variable $p$-adic $L$-function and the theory of elliptic units, following mainly \cite{deS} and \cite{Yag}, but adapting their results to the framework discussed before. We also recall the Perrin-Riou big logarithm and recast Yager's theorem, which gives an explicit reciprocity law analogue to Theorem \ref{rec-kub} in this setting. We recover the notations of the introduction, where $K$ is an imaginary quadratic field and we fix a prime $p$ which splits on $K$, i.e. $p\mathcal O_K = \mathfrak p \bar{\mathfrak p}$. We also fix an identification of $\mathbb C_p$ with $\mathbb C$ and embeddings of $\bar{\mathbb Q}$ to either of these fields, which are compatible with these identifications. Let $h$ denote the class number of $K$. Then, let $\pi_{\mathfrak p} \in \mathcal O_K$ be such that $\mathfrak p^h = \pi_{\mathfrak p} \mathcal O_K$, and define $\varpi_{\mathfrak p} = \pi_{\mathfrak p}/\pi_{\bar{\mathfrak p}}$. For simplicity, we assume that $\mathcal O_K^{\times} = \pm 1$ and that the discriminant of $K$ is an odd number $D<0$.

Consider also a non-trivial Hecke character of finite order $\psi$, of conductor $\mathfrak n$, where $(\mathfrak n,p)=1$. In the particular case that $\chi$ is a Dirichlet character of conductor $N:=\mathcal N_{K/\mathbb Q}(\mathfrak n)$, the Dirichlet character may be seen by restriction as an example of the kind of Hecke characters we are interested in, provided that $K$ is a quadratic field where all primes dividing $N$ split. As before, let $F$ stand for the field cut out by the character and $F_p$ for its completion.

\subsection{Elliptic units}\label{eu}
Elliptic units are the result of evaluating modular units at CM points. They give rise to units in abelian extensions of the imaginary quadratic field $K$ and are the counterpart of circular units for cyclotomic fields. They also constitute one of the key ingredients for the proof of the Iwasawa main conjecture for imaginary quadratic fields \cite{Rub}.

For the general construction of elliptic units, we refer the reader to the seminal work of Coates and Wiles \cite{CW}, or alternatively to Robert's original paper \cite{Rob}. Let us give an explicit description in the special setting where the conductor $\mathfrak n$ of $\psi$ satisfies that there exists a rational integer $N$ such that $\mathcal O_K/\mathfrak n \simeq \mathbb Z/N\mathbb Z$, and $\psi$ can be interpreted as a Dirichlet character of conductor $N$. We closely follow the survey \cite{BCDDPR} for that purpose.

Special values of $L$-series are encoded in terms of the so-called Siegel units $g_a \in \mathcal O_{Y_1(N)}^{\times} \otimes \mathbb Q$ attached to a fixed choice of primitive $N$-th root of unity $\zeta_N$ and a parameter $1 \leq a \leq N-1$. Its $q$-expansion is given by
\begin{equation}\label{siegel}
g_a(q) = q^{1/12} (1-\zeta_N^a) \prod_{n>0}(1-q^n \zeta_N^a)(1-q^n \zeta_N^{-a}).
\end{equation}

Let $\tau_{\mathfrak n} = \frac{b+\sqrt{D}}{2N}$, where $\mathfrak n = \mathbb Z N + \mathbb Z \frac{b+\sqrt{D}}{2}$. The classical and $p$-adic elliptic units are defined by
\begin{equation}\label{siegel2}
u_{a,\mathfrak n}:=g_a(\tau_{\mathfrak n}), \qquad u_{a, \mathfrak n}^{(p)}:=g_a^{(p)}(\tau_{\mathfrak p \mathfrak n}),
\end{equation}
being $g_a$ the infinite product of \eqref{siegel} and $g_a^{(p)}: =g_{pa}(q^p)g_a(q)^{-p}$.
As we did with circular units, we may define
\begin{equation}\label{def-unit}
u_{\psi} := \prod_{\sigma \in \Gal(K_{\mathfrak n}/K)} (\sigma u_{1,\mathfrak n})^{\psi^{-1}(\sigma)},
\end{equation}
where $K_{\mathfrak n}$ is the ray class field of $K$ of conductor $\mathfrak n$ and $u_{1,\mathfrak n} \in \mathcal O_{K_{\mathfrak n}}^{\times}$. In additive notation, this corresponds to $(\sigma u_{1,\mathfrak n}) \otimes \psi^{-1}(\sigma)$. This construction works in greater generality and one can always define the element $u_{\psi}$ (see \cite{Rob}). These units are the bottom layer of a norm compatible family of elliptic units over the two-variable $\mathbb Z_p$-extension $K_{\infty}$ of $K$.

Performing a similar construction to that of \eqref{im-kummer} and \eqref{im-patching}, the work of Katz \cite{Ka} and de Shalit \cite{deS} gives a cohomology class
\begin{equation}
\kappa_{\psi, \infty} \in H^1(K,\Lambda_K \otimes F_p(\psi^{-1})(\mathcal N)),
\end{equation}
where $\Lambda_K$ is the two-variable Iwasawa algebra of the introduction endowed with the tautological Galois action.
In particular, if $\eta$ is a Hecke character of infinity type $(\kappa_1,\kappa_2)$, the global class obtained by specializing $\kappa_{\psi,\infty}$ at $\eta$, although it arises from elliptic units, encodes information about a Galois representation of $K$ attached to a Hecke character.

\subsection{Katz's two-variable p-adic L-function of an imaginary quadratic field}\label{katz}
The classical two-variable $L$-function attached to $K$ and a Hecke $\psi$ is defined by \[ L(K, \psi, \kappa_1, \kappa_2) := \sideset{}{'}\sum_{\alpha \in \mathcal O_K} \psi(\alpha) \alpha^{-\kappa_1} \bar \alpha^{-\kappa_2}, \] where the sum is over the set of non-zero ideals of $\mathcal O_K$. This $L$-series allows us to recover the more familiar $L$-function attached to a character $\psi$ of an imaginary quadratic field, via the relation
\begin{equation}\label{1vs2}
L(K,\psi,s) = \frac{1}{2} L(K, \psi,s,s).
\end{equation}

We follow \cite[Section 3]{DLR1} for the construction of Katz's two variable $p$-adic $L$-function. Let $\mathfrak c \subset \mathcal O_K$ be an integral ideal of $K$, and let $\Sigma$ be the set of Hecke characters of $K$ of conductor dividing $\mathfrak c$. Define $\Sigma_K = \Sigma_K^{(1)} \cup \Sigma_K^{(2)} \subset \Sigma$ to be the disjoint union of the sets \[ \Sigma_K^{(1)} = \{ \psi \in \Sigma \text{ of infinity type } (\kappa_1, \kappa_2), \text{ with } \kappa_1 \leq 0, \kappa_2 \geq 1 \}, \] \[ \Sigma_K^{(2)} = \{ \psi \in \Sigma \text{ of infinity type } (\kappa_1, \kappa_2), \text{ with } \kappa_1 \geq 1, \kappa_2 \leq 0 \}. \] For all $\psi \in \Sigma_K$, the complex argument $s=0$ is a critical point for $L(\psi^{-1},s)$, and Katz's $p$-adic $L$-function is constructed interpolating the algebraic part of $L(\psi^{-1},0)$, as $\psi$ ranges over $\Sigma_K^{(2)}$.

Let $\hat \Sigma_K$ be the completion of $\Sigma_K^{(2)}$ with respect to the compact open topology on the space of $\mathcal O_{L_p}$-valued functions on a subset of $\mathbb A_K^{\times}$. By the work of Katz, there exists a $p$-adic analytic function \[ L_{\mathfrak p}(K): \hat \Sigma_K \longrightarrow \mathbb C_p, \] uniquely determined by the interpolation property that for all $\xi \in \Sigma_K^{(2)}$ of infinity type $(\kappa_1,\kappa_2)$,
\begin{equation}\label{katz3}
L_{\mathfrak p}(K)(\xi) = \mathfrak a(\xi) \times \mathfrak e(\xi) \times \mathfrak f(\xi) \times \frac{\Omega_p^{\kappa_1-\kappa_2}}{\Omega^{\kappa_1-\kappa_2}} \times L_{\mathfrak c}(\xi^{-1},0),
\end{equation}
where
\begin{enumerate}
\item $\mathfrak a(\xi) = (\kappa_1-1)! \pi^{-\kappa_2}$,
\item $\mathfrak e(\xi) = (1-\xi(\mathfrak p)p^{-1})(1-\xi^{-1}(\bar{\mathfrak p}))$,
\item $\mathfrak f(\xi) = D_K^{\kappa_2/2} 2^{-\kappa_2}$,
\item $\Omega_p \in \mathbb C_p^{\times}$ is a $p$-adic period attached to $K$,
\item $\Omega \in \mathbb C^{\times}$ is the complex period associated to $K$,
\item $L_{\mathfrak c}(\xi^{-1},s)$ is Hecke's $L$-function associated to $\xi^{-1}$ with the Euler factors at primes dividing $\mathfrak c$ removed.
\end{enumerate}

We have followed the conventions of \cite[Proposition 3.1]{BDP2}, which in turn follows from \cite[Section 5.3.0]{Ka}. Observe that the definition is not symmetric with respect to the primes $\mathfrak p$ and $\bar{\mathfrak p}$ above $p$, and hence we can also consider the function $L_{\bar{\mathfrak p}}(K)(\cdot)$.

The $p$-adic $L$-function $L_{\mathfrak p}(K)(\cdot)$ satisfies a functional equation
\begin{equation}\label{fe}
L_{\mathfrak p}(K)(\xi) = L_{\mathfrak p}(K)((\xi')^{-1} \mathcal N),
\end{equation}
where again $\xi'$ is the composition of $\xi$ with the complex conjugation (see \cite[pages 90--91]{Gro}). We remark that since our characters are unramified at $p$, the Gauss sum that sometimes appears in the interpolation formula is equal to 1. Finally, and according to this definition, the interpolation is over the special values of the form $L_{\mathfrak c}(\xi^{-1},0)$; this explains some {\it discrepancies} regarding certain conventions with the case of circular units.

It is possible to obtain an expression for the value of $L_{\mathfrak p}(K)(\psi)$ at finite order characters. This is usually referred to as the {\it $p$-adic Kronecker's limit formula}, and is due to Katz:
\begin{equation}\label{special}
L_{\mathfrak p}(K)(\psi) = \begin{cases} \frac{1}{2} \Big(\frac{1}{p}-1 \Big) \cdot \log_p(\pi_{\mathfrak p}^{1/h}) & \text{ if } \psi = 1;\\ -(1-\psi^{-1}(\bar{\mathfrak p}))(1-\psi(\mathfrak p)p^{-1}) \cdot \log_p(u_{\psi}) & \text { if } \psi \neq 1. \end{cases}
\end{equation}

Here, $h$ stands for the class number of $K$ and $\pi_{\mathfrak p}$ for a generator of the $\mathcal O_K$-ideal $\mathfrak p^h$.
Via the functional equation, we also have an expression for the value at the $\psi \mathcal N$
\begin{equation}\label{special2}
L_{\mathfrak p}(K)(\psi \mathcal N) = \begin{cases} \frac{1}{2} \Big(\frac{1}{p}-1 \Big) \cdot \log_p(\pi_{\mathfrak p}^{1/h}) & \text{ if } \psi = 1;\\ -(1-\psi(\mathfrak p))(1-\psi^{-1}(\bar{\mathfrak p})p^{-1}) \cdot \log_p(u_{(\psi')^{-1}}) & \text { if } \psi \neq 1. \end{cases}
\end{equation}


\begin{defi}
Let $L_{\mathfrak p}(K,\psi)(\cdot)$ stand for the restriction of the $p$-adic $L$-function of \eqref{katz3} to characters of the form $\psi \xi \lambda^{\kappa_1} (\lambda')^{\kappa_2}$, where $\xi$ is a character of $p$-power conductor and $\lambda$ is the character of infinity type $(1,0)$ presented in the introduction.

In particular, write $L_{\mathfrak p}(K,\psi)(\chi_{\triv}):=L_{\mathfrak p}(K)(\psi)$ and $L_{\mathfrak p}(K,\psi)(\mathcal N):=L_{\mathfrak p}(K)(\psi \mathcal N)$.

\end{defi}

\begin{remark}
Depending on the normalization we choose for the two variable $p$-adic $L$-function, the value $L_{\mathfrak p}(K)(\psi)$ may be affected by multiplication by a non-zero explicit rational number. Further, this number can depend on the conductor of $\psi$; however, since we are restricting the function to characters of the form $\psi \xi$, where $\xi$ has $p$-power conductor, we can adopt a suitable normalization such that our special value formulas always work.

Further, observe that the $p$-adic $L$-function of \cite[Theorem 6.3]{Buy} also differs from this one in the factor $(1-\xi^{-1}(\bar{\mathfrak p}))$.
\end{remark}

We finish this description of Katz's two-variable $p$-adic $L$-function by discussing its relation with the theory of improved $p$-adic $L$-functions. We say that a character is {\it analytic} if it is of the form $\psi \lambda^t$, with $t \in \mathbb Z^{\geq 0}$. The reason for this terminology is that they correspond to the subvariety of the weight space along which the Euler factors appearing in the Perrin-Riou map are analytic as functions in the variable $t$. Katz constructed in \cite[Section 7.2]{Ka} a one-variable $p$-adic $L$-function $L_{\mathfrak p}^*(K,\psi,k)$, such that the restriction of $L_{\mathfrak p}(K,\psi)$ to analytic characters yields the equation
\begin{equation}\label{1vs2p}
L_{\mathfrak p}(K,\psi)(\lambda^k) = (1-\psi^{-1}(\bar{\mathfrak p})\pi_{\bar{\mathfrak p}}^{-k/h}) L_{\mathfrak p}^*(K,\psi,k).
\end{equation}

The ratio of the two $p$-adic $L$-series is a $p$-adic analytic function of $k$, since $\pi_{\bar{\mathfrak p}}$ belongs to $\mathcal O_{K_{\mathfrak p}}^{\times}$. This ratio measures the difference between working with the ordinary $p$-stabilization of the Eisenstein series, $E_{k,\psi}^{(p)}$, and the $p$-depletion, $E_{k,\psi}^{[p]}$. Further, by a result of Katz \cite[Section 7.2]{Ka}, one has a relation between the $p$-adic $L$-function of a quadratic imaginary field and elliptic units, given by
\begin{equation}\label{rel1}
L_{\mathfrak p}^*(K,\psi,0) = -(1-\psi(\mathfrak p) p^{-1}) \cdot \log_p(u_{\psi}).
\end{equation}

We refer the reader to \cite[Section 4.3]{BD} for a more detailed exposition of this material.

\subsection{A reciprocity law for elliptic units}\label{llei}

In this section, we recall the existence of a Perrin-Riou map interpolating both the dual exponential map and the Bloch--Kato logarithm, as we did with circular units. We closely follow the treatment of \cite{deS}, recalling the main properties of this regulator map, whose source is the Iwasawa cohomology of the representation induced by a Hecke character, and which interpolates the Bloch--Kato logarithm and the dual exponential map, depending on the Hodge--Tate type of the character at which we specialize.

Although this is part of a rather general theory, we are interested in a more down-to-earth version of these results, which have been recovered by Loeffler and Zerbes in \cite{LZ} in the setting of {\it two-variable Perrin-Riou regulators}. In particular, Theorem 4.15 of loc.\,cit. gives an analogue to Proposition \ref{perrin} in the setting of elliptic units. We restrict to characters of $K$ of the form $\psi \lambda^{\kappa_1} (\lambda')^{\kappa_2}$, where $\lambda$ is the character of infinity type $(1,0)$ of the introduction and $\lambda'$ is its complex conjugate. Of course we may also consider twists by characters $\xi$ of $p$-power order, but we neglect this possibility so as to ease the exposition.

Recall that $\Lambda_K$ is the two-variable Iwasawa algebra attached to $K$ with the tautological Galois action. As with circular units, for any character $\eta=\lambda^{\kappa_1} (\lambda')^{\kappa_2}$ as above there exists a specialization map that we normalize, to ease the exposition, as the one inducing specializations of the form \[ \nu_{\kappa_1,\kappa_2}: \Lambda_K \otimes F_p(\psi^{-1})(\mathcal N) \rightarrow F_p(\psi^{-1})(\lambda^{\kappa_2} (\lambda')^{\kappa_1}). \] We identify again the target of both the dual exponential map and the Bloch--Kato logarithm with $F_p$. The following result has been established in \cite[Section 6.4, Appendix B]{LZ}, and here is presented in the language of \cite[Section 8]{KLZ}.

\begin{propo}\label{aquest-perrin}
There exists a morphism \[ \mathcal L_{\mathfrak p}: H^1(K_{\mathfrak p}, \Lambda_K \otimes F_p(\psi^{-1})(\mathcal N)) \longrightarrow \Lambda_K \] interpolating both the dual exponential map and the Bloch--Kato logarithm, and such that for any point $\eta$ of infinity type $(\kappa_1,\kappa_2)$ with $\xi=1$, the specialization of $\mathcal L$ at $\eta$ is the homomorphism \[ \nu_{\kappa_1,\kappa_2}(\mathcal L_{\mathfrak p}): H^1(K_{\mathfrak p}, F_p(\psi^{-1})(\lambda^{\kappa_2} (\lambda')^{\kappa_1})) \longrightarrow \mathbb D_{\dR}(F_p(\psi^{-1})(\lambda^{\kappa_2} (\lambda')^{\kappa_1})) \simeq  F_p \] given by
\[ \nu_{\kappa_1,\kappa_2}(\mathcal L_{\mathfrak p}) = \frac{1-\psi(\mathfrak p) \pi_{\mathfrak p}^{-\kappa_2/h}\pi_{\bar{\mathfrak p}}^{-\kappa_1/h}}{1-\frac{\psi^{-1}(\mathfrak p) \pi_{\mathfrak p}^{\kappa_2/h} \pi_{\bar{\mathfrak p}}^{\kappa_1/h}}{p}} \begin{cases} \frac{(-t)^{\kappa_2}}{\kappa_2!} \log_{\BK} & \text{ if } \kappa_2 > 0 \\ (-\kappa_2)!t^{\kappa_2} \exp_{\BK}^* & \text{ if } \kappa_2 \leq 0 . \end{cases} \]
\end{propo}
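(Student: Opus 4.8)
The plan is to deduce the statement from the general construction of two-variable Perrin-Riou regulators of Loeffler and Zerbes, specialized to the rank-one local situation at hand, and then to pin down the explicit interpolation formula by a direct Frobenius computation. First I would observe that, since $p$ splits in $K$, the completion $K_{\mathfrak p}$ is canonically isomorphic to $\mathbb Q_p$, so that the restriction to the decomposition group at $\mathfrak p$ of $\Lambda_K \otimes F_p(\psi^{-1})(\mathcal N)$ is a free $\Lambda_K$-module of rank one carrying the tautological $G_{\mathbb Q_p}$-action. The big logarithm map of \cite[Theorem 4.15]{LZ} (equivalently, in the reformulation of \cite[Section 8]{KLZ}) then produces a $\Lambda_K$-linear morphism $\mathcal L_{\mathfrak p}$ out of the Iwasawa cohomology of this module with values in $\mathbb D_{\crys}$ of the local representation tensored with $\Lambda_K$; since the crystalline module is one-dimensional over $F_p$, this target is identified with $\Lambda_K$ itself. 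The content of the proposition is therefore entirely the verification of the interpolation property at a fixed classical point $\eta$.

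Next I would analyze the local representation $F_p(\psi^{-1})(\lambda^{\kappa_2}(\lambda')^{\kappa_1})$ obtained by specializing at $\eta$ under $\nu_{\kappa_1,\kappa_2}$. Because $\lambda$ is the Gr\"ossencharacter ramified precisely at $\mathfrak p$ while $\lambda'$ and $\psi$ are unramified there, the restriction to $G_{K_{\mathfrak p}}$ is crystalline, with Hodge--Tate weight governed by the exponent $\kappa_2$ of $\lambda$, yielding a positive weight exactly when $\kappa_2>0$; this is what produces the dichotomy between $\log_{\BK}$ (when $\kappa_2 > 0$, so that the class is de Rham and the logarithm is an isomorphism) and $\exp_{\BK}^*$ (when $\kappa_2 \leq 0$), matching exactly the general Perrin--Riou dichotomy recalled after Proposition \ref{perrin}. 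The normalizing factors $\frac{(-t)^{\kappa_2}}{\kappa_2!}$ and $(-\kappa_2)! t^{\kappa_2}$ are then forced by the general formula, the shift relative to the circular-unit case of Proposition \ref{perrin} being accounted for by the Tate twist by $\mathcal N$ and the index swap built into $\nu_{\kappa_1,\kappa_2}$.

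The decisive step, and the one I expect to be the main obstacle, is the computation of the crystalline Frobenius eigenvalue $\phi$ acting on $\mathbb D_{\crys}$ of this CM character and its matching against the general Euler factor $\frac{1-p^{j}\phi}{1-p^{-1-j}\phi^{-1}}$. One has to evaluate the character $\psi^{-1}\lambda^{\kappa_2}(\lambda')^{\kappa_1}$ on a Frobenius at $\mathfrak p$, which forces the appearance of $\psi(\mathfrak p)$ together with the quantities $\pi_{\mathfrak p}^{-\kappa_2/h}$ and $\pi_{\bar{\mathfrak p}}^{-\kappa_1/h}$ coming from the infinity-type contributions, so that $\phi = \psi(\mathfrak p)\pi_{\mathfrak p}^{-\kappa_2/h}\pi_{\bar{\mathfrak p}}^{-\kappa_1/h}$; substituting into the numerator and denominator (with $j=0$) then yields precisely the factor $\frac{1-\psi(\mathfrak p)\pi_{\mathfrak p}^{-\kappa_2/h}\pi_{\bar{\mathfrak p}}^{-\kappa_1/h}}{1-p^{-1}\psi^{-1}(\mathfrak p)\pi_{\mathfrak p}^{\kappa_2/h}\pi_{\bar{\mathfrak p}}^{\kappa_1/h}}$ displayed in the statement. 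The delicate part here is the bookkeeping of the class number $h$, the $p$-adic and complex periods, and the sign and inverse conventions inherent in the passage between Hecke characters, their associated Galois representations, and the de Rham comparison used to identify $\mathbb D_{\dR}$ with $F_p$; getting all of these normalizations simultaneously consistent with the cited two-variable regulator of \cite{LZ} is where the real work lies.
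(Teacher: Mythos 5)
Your proposal is correct and follows essentially the same route as the paper, which proves this proposition purely by citation to the two-variable regulator of Loeffler--Zerbes (\cite[Theorem 4.15, Section 6.4, Appendix B]{LZ}, recast in the language of \cite[Section 8]{KLZ}). Your additional verification -- identifying the local module as rank one over $\Lambda_K$ via $K_{\mathfrak p}\simeq \mathbb Q_p$, and computing the crystalline Frobenius eigenvalue $\psi(\mathfrak p)\pi_{\mathfrak p}^{-\kappa_2/h}\pi_{\bar{\mathfrak p}}^{-\kappa_1/h}$ to recover the displayed Euler factor -- is exactly the bookkeeping the paper leaves implicit in that citation, and it matches the stated formula.
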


\begin{remark}
It is interesting to analyze the shape of the Euler factors and compare it with those of \cite[Theorem 3.5]{Cas}. To follow this parallelism, let $\kappa=\kappa_1+\kappa_2$ and $r=-\kappa_2$. Then,
\begin{equation}
\frac{1-\psi(\mathfrak p) \pi_{\mathfrak p}^{-\kappa_2/h}\pi_{\bar{\mathfrak p}}^{-\kappa_1/h}}{1-\frac{\psi^{-1}(\mathfrak p) \pi_{\mathfrak p}^{\kappa_2/h} \pi_{\bar{\mathfrak p}}^{\kappa_1/h}}{p}} = \frac{1-\psi(\mathfrak p) \pi_{\mathfrak p}^{r/h}\pi_{\bar{\mathfrak p}}^{(-\kappa-r)/h}}{1-\frac{\psi^{-1}(\mathfrak p) \pi_{\mathfrak p}^{-r/h} \pi_{\bar{\mathfrak p}}^{(\kappa+r)/h}}{p}} = \frac{1-{\psi(\mathfrak p)}\varpi_{\mathfrak p}^{r/h} \pi_{\bar{\mathfrak p}}^{-\kappa/h}}{1-\frac{\psi^{-1}(\mathfrak p)\varpi_{\mathfrak p}^{-r/h}\pi_{\bar{\mathfrak p}}^{\kappa/h}}{p}}.
\end{equation}

Our results concerning elliptic units can be seen as a counterpart of those for Heegner points when the cuspidal Hida family is replaced by an Eisentein series.
\end{remark}

\begin{remark}
As we will discuss in the last section, this also fits well with \cite[Proposition 3.2]{RiRo}, which is a reformulation of \cite[Theorem 8.1.7]{KLZ}; with the notations of loc.\,cit., if we fix $s=0$ and identify the modular forms $g$ and $h$ with two weight one theta series attached to the imaginary quadratic field $K$, we recover the map of Proposition \ref{aquest-perrin}. Further, observe that the numerology is coherent, and the condition $m>s$ defining the region of interpolation of the Bloch--Kato logarithm becomes in this case $\kappa_2>0$.
\end{remark}

The following result is the main theorem of \cite{Yag}.
\begin{propo}\label{lareclaw}
The cohomology class $\kappa_{\psi,\infty} \in H^1(K,\Lambda_K \otimes F_p(\psi^{-1}))$ satisfies \[ L_{\mathfrak p}(K,(\psi')^{-1}) = \mathcal L_{\mathfrak p}(\loc_{\mathfrak p}(\kappa_{\psi,\infty})). \]
\end{propo}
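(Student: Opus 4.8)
The plan is to establish the identity as an equality of elements of $\Lambda_K$ (the common target of both sides, the right-hand one by Proposition~\ref{aquest-perrin}) by comparing the two sides after specialization at a Zariski-dense set of arithmetic points. The natural choice is the set of characters $\eta=\lambda^{\kappa_1}(\lambda')^{\kappa_2}$ with $(\kappa_1,\kappa_2)$ in the range $\kappa_1\geq 1$, $\kappa_2\leq 0$ cutting out $\Sigma_K^{(2)}$, precisely the quadrant on which Katz's function is pinned down by the interpolation formula \eqref{katz3}. These points are dense and $\Lambda_K$ is a power-series ring, so an element is determined by its values there; it therefore suffices to check that $\nu_{\kappa_1,\kappa_2}$ carries each side to the same number for all such $(\kappa_1,\kappa_2)$.

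First I would unwind the right-hand side. Since $\kappa_2\leq 0$ on $\Sigma_K^{(2)}$, Proposition~\ref{aquest-perrin} puts us squarely in the dual-exponential range, so $\nu_{\kappa_1,\kappa_2}\bigl(\mathcal L_{\mathfrak p}(\loc_{\mathfrak p}(\kappa_{\psi,\infty}))\bigr)$ equals the explicit Euler factor, times a power of $t$, times $\exp_{\BK}^*$ applied to the localization at $\mathfrak p$ of the specialized global class. The fact that the conjugate-inverse character $(\psi')^{-1}$ appears on the left, rather than $\psi$, is forced by the duality built into the dual-exponential pairing, and I would track this conjugation carefully from the start so that the specialized $L$-value on the left matches the character attached to the specialized class on the right.

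The heart of the matter — and the step I expect to be hardest — is to identify this $\exp_{\BK}^*$-value with the special value $L_{\mathfrak c}$ of the complex Hecke $L$-function appearing in \eqref{katz3}, up to the period ratio $\Omega_p^{\kappa_1-\kappa_2}/\Omega^{\kappa_1-\kappa_2}$. I would carry this out through the theory of Coleman power series for elliptic units, following de Shalit \cite{deS}: the localization at $\mathfrak p$ of the elliptic-unit Euler system is governed by an explicit Coleman map manufactured from the $q$-expansions \eqref{siegel} of the Siegel units $g_a$, and the dual exponential extracts the relevant coefficient of the logarithmic derivative of these power series, which by a Damerell-type computation is an algebraic multiple of the complex $L$-value divided by $\Omega^{\kappa_1-\kappa_2}$. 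On the other side, Katz constructs $L_{\mathfrak p}(K)$ in \cite{Ka} as the image of these very elliptic units under an Eisenstein-measure operator, introducing the compensating $p$-adic period $\Omega_p^{\kappa_1-\kappa_2}$; matching the Coleman logarithmic derivative with Katz's measure is what yields the equality on the dense set.

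The principal obstacle is the normalization bookkeeping in this last step: reconciling the $p$-adic period $\Omega_p$ with the lattice normalization implicit in the Coleman series, and verifying that the Euler factor $\mathfrak e(\xi)=(1-\xi(\mathfrak p)p^{-1})(1-\xi^{-1}(\bar{\mathfrak p}))$ of \eqref{katz3} coincides with the factor $\tfrac{1-\psi(\mathfrak p)\pi_{\mathfrak p}^{-\kappa_2/h}\pi_{\bar{\mathfrak p}}^{-\kappa_1/h}}{1-\psi^{-1}(\mathfrak p)\pi_{\mathfrak p}^{\kappa_2/h}\pi_{\bar{\mathfrak p}}^{\kappa_1/h}p^{-1}}$ of Proposition~\ref{aquest-perrin} after substituting $\xi=\psi\eta$ and performing the conjugation to $(\psi')^{-1}$. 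Once these normalizations are aligned and the two sides agree throughout $\Sigma_K^{(2)}$, density in $\Lambda_K$ finishes the proof; the values in the complementary range $\kappa_2>0$, where Proposition~\ref{aquest-perrin} instead interpolates $\log_{\BK}$, are then forced by continuity and require no separate computation.
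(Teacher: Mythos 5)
Your proposal is sound in outline, but it takes a completely different route from the paper for a simple reason: the paper does not prove this proposition at all. It states it as ``the main theorem of \cite{Yag}'' and invokes it as a known explicit reciprocity law, exactly parallel to how Theorem~\ref{rec-kub} is quoted for circular units from Coleman and Perrin-Riou. What you have written is, in effect, a reconstruction of the strategy behind Yager's theorem itself: reduce to a Zariski-dense set of specializations in $\Sigma_K^{(2)}$ (correct, and the density argument in the two-variable Iwasawa algebra does work), use the $\exp_{\BK}^*$ branch of Proposition~\ref{aquest-perrin} there, and identify the dual exponential of the localized elliptic-unit class with the algebraic part of the Hecke $L$-value via Coleman power series and Katz's Eisenstein measure. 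That is indeed how the result is established in the literature (\cite{Yag}, \cite{deS}, \cite{Ka}). The trade-off is clear: the paper's citation is the appropriate move in context, since the reciprocity law is a deep external input rather than something re-derived; your sketch correctly locates where the real work lies, but the step you flag as the ``heart of the matter'' --- matching the Coleman logarithmic derivative against Katz's measure, including the reconciliation of the product-form Euler factor $\mathfrak e(\xi)$ with the ratio appearing in Proposition~\ref{aquest-perrin} --- is not bookkeeping layered on top of known results; it is essentially the entire content of Yager's Annals paper, so your proposal should be read as a correct proof architecture with its central pillar delegated to that reference rather than as a self-contained argument.
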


Again, the denominator of the Perrin-Riou regulator may vanish. Assume that $\psi(\mathfrak p)=1$. Then, we have the following:
\begin{enumerate}
\item [(i)] If $\kappa_1=\kappa_2=0$, then the numerator vanishes and the denominator equals $1-p^{-1}$.
\item [(ii)] If $\kappa_1=\kappa_2=1$, the numerator equals $1-p^{-1}$ and the denominator vanishes.
\end{enumerate}

\begin{remark}
For a fixed $\kappa_2$, both the numerator and the denominator are analytic functions on the variable $\kappa_1$.
\end{remark}

\section{Exceptional zeros and elliptic units}

We analyze different instances of exceptional zero phenomena and discuss the existence of {\it derived} cohomology classes and some of their properties. Our main result, stated as Theorem \ref{teo1} in the introduction, is about the exceptional vanishing of $\kappa_{\psi \mathcal N}$, but for the sake of convenience we also study the exceptional vanishing of Katz's two variable $p$-adic $L$-functon at $\psi$ in Section \ref{00}. Then, in Section \ref{11} we discuss the different cases of exceptional zeros at $\psi \mathcal N$ and prove the main result of the note. Finally, Section \ref{caso} discusses the special case where $\psi$ is the restriction of a Dirichlet character, suggesting a tantalizing connection with the theory of circular units.

\subsection{Specialization at the character $\psi$}\label{00}

We assume that the condition $\psi(\bar{\mathfrak p}) = 1$ is satisfied. We begin this section by discussing the vanishing of Katz's two-variable $p$-adic $L$-function at the character $\psi$ under this hypothesis. Indeed, from \eqref{1vs2p}, it is straightforward that $\psi(\bar{\mathfrak p})=1$ is a necessary and sufficient condition for the vanishing of $L_{\mathfrak p}(K,\psi)(\chi_{\triv})$ when the classical value is non-zero.

Until otherwise stated, and so as to shorten the notation, derivatives of $p$-adic $L$-functions are considered along the character $\lambda$ (that is, along the $H_{\infty}$ direction).
\begin{propo}\label{form-ell-unit}
Assume that $\psi(\bar{\mathfrak p})=1$. Then, $L_{\mathfrak p}(K,\psi)(\chi_{\triv})=0$, and
\begin{equation}\label{katz-buscada}
L_{\mathfrak p}'(K,\psi)(\chi_{\triv}) = \log_p(\pi_{\mathfrak p}^{1/h}) \cdot (1-\psi(\mathfrak p) p^{-1}) \cdot \log_p(u_{\psi}).
\end{equation}
\end{propo}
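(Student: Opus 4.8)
The plan is to deduce the statement from the factorization of $L_{\mathfrak p}(K,\psi)$ through Katz's improved one-variable $p$-adic $L$-function recorded in \eqref{1vs2p}, which conveniently isolates the exceptional zero in a single Euler factor. Since the trivial character $\chi_{\triv}$ corresponds to $\lambda^0$, I would write, for $k$ in a $p$-adic neighbourhood of $0$,
\[ L_{\mathfrak p}(K,\psi)(\lambda^k) = A(k)\, B(k), \qquad A(k) = 1-\psi^{-1}(\bar{\mathfrak p})\pi_{\bar{\mathfrak p}}^{-k/h}, \quad B(k) = L_{\mathfrak p}^{*}(K,\psi,k). \]
Evaluating at $k=0$ gives $A(0) = 1-\psi^{-1}(\bar{\mathfrak p})$, and under the hypothesis $\psi(\bar{\mathfrak p})=1$ (equivalently $\psi^{-1}(\bar{\mathfrak p})=1$, as $\psi$ has finite order) this factor vanishes. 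Hence $L_{\mathfrak p}(K,\psi)(\chi_{\triv}) = A(0)B(0) = 0$, which is the first assertion; this is the same exceptional vanishing already visible from \eqref{special} via the factor $(1-\psi^{-1}(\bar{\mathfrak p}))$.

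For the derivative along $\lambda$, I would differentiate the product $A(k)B(k)$ at $k=0$. Since $B(k)=L_{\mathfrak p}^{*}(K,\psi,k)$ is $p$-adic analytic at $0$ (this is exactly the point of the improved $L$-function, whose ratio with $L_{\mathfrak p}(K,\psi)$ is analytic because $\pi_{\bar{\mathfrak p}}$ is a unit at $\mathfrak p$) and $A(0)=0$, the Leibniz rule collapses to
\[ L_{\mathfrak p}'(K,\psi)(\chi_{\triv}) = A'(0)\,B(0). \]
The value $B(0)=L_{\mathfrak p}^{*}(K,\psi,0) = -(1-\psi(\mathfrak p)p^{-1})\log_p(u_{\psi})$ is furnished directly by Katz's relation \eqref{rel1}, so everything reduces to computing $A'(0)$.

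It remains to differentiate the Euler factor. Writing $\pi_{\bar{\mathfrak p}}^{-k/h}$ as the $p$-adic analytic function $\exp(-\tfrac{k}{h}\log_p(\pi_{\bar{\mathfrak p}}))$, its logarithmic derivative is constant, giving $A'(0) = \tfrac{1}{h}\psi^{-1}(\bar{\mathfrak p})\log_p(\pi_{\bar{\mathfrak p}}) = \tfrac{1}{h}\log_p(\pi_{\bar{\mathfrak p}})$ once $\psi^{-1}(\bar{\mathfrak p})=1$ is used. The final ingredient is the relation $\log_p(\pi_{\bar{\mathfrak p}}) = -\log_p(\pi_{\mathfrak p})$: it follows from $\pi_{\mathfrak p}\pi_{\bar{\mathfrak p}} = \pm p^{h}$ (since $\mathfrak p\bar{\mathfrak p}=(p)$ and $\mathcal O_K^{\times}=\pm 1$) together with the normalization $\log_p(p)=0$ of the Iwasawa logarithm. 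Substituting, $A'(0) = -\tfrac{1}{h}\log_p(\pi_{\mathfrak p}) = -\log_p(\pi_{\mathfrak p}^{1/h})$, and multiplying by $B(0)$ the two minus signs cancel, yielding exactly $\log_p(\pi_{\mathfrak p}^{1/h})\cdot(1-\psi(\mathfrak p)p^{-1})\cdot\log_p(u_{\psi})$. The only genuinely delicate points are the correct $p$-adic interpretation of the logarithmic derivative of $\pi_{\bar{\mathfrak p}}^{-k/h}$ and the sign bookkeeping through $\log_p(\pi_{\bar{\mathfrak p}})=-\log_p(\pi_{\mathfrak p})$; the rest is a formal application of the Leibniz rule to the two cited formulas of Katz.
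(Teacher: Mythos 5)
Your proposal is correct and is essentially the paper's own proof: the paper likewise obtains the result by differentiating the improved-factorization identity \eqref{1vs2p} at $k=0$, applying Katz's special value formula \eqref{rel1} for $L_{\mathfrak p}^{*}(K,\psi,0)$, and using $\log_p(\pi_{\mathfrak p}^{1/h}) = -\log_p(\pi_{\bar{\mathfrak p}}^{1/h})$. You merely make explicit the Leibniz-rule collapse and the sign bookkeeping that the paper leaves implicit.
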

\begin{proof}
This follows by taking derivatives in \eqref{1vs2p} and applying the special value formula of \eqref{rel1}. Observe that we have also used that $\log_p(\pi_{\mathfrak p}^{1/h}) = -\log_p(\pi_{\bar{\mathfrak p}}^{1/h})$.
\end{proof}

The Euler factors arising in the Perrin-Riou map are analytic in the variable $\kappa_1$ once the value of $\kappa_2$ is fixed. In particular, we may consider the Perrin-Riou map with $\kappa_2=0$ fixed. Writing $\kappa_{\kappa_1,\psi}$ for the corresponding specialization of the $\Lambda$-adic class, one has \[ (1-\pi_{\bar{\mathfrak p}}^{\kappa_1/h}/p) \cdot L_{\mathfrak p}(K,\psi)(\chi_{\triv}) = (1-\pi_{\bar{\mathfrak p}}^{-\kappa_1/h}) \cdot \exp_{\BK}^*(\loc_{\mathfrak p}(\kappa_{\kappa_1,(\psi')^{-1}})), \] for all $\kappa_1 \geq 0$. Taking derivatives with respect to $\kappa_1$ at both sides and evaluating at $\kappa_1=0$, we get that
\begin{equation}\label{eva1}
(1-p^{-1}) L_{\mathfrak p}'(K,\psi)(\chi_{\triv}) = -\log_p(\pi_{\mathfrak p}^{1/h}) \cdot \exp_{\BK}^*(\loc_{\mathfrak p}(\kappa_{(\psi')^{-1}})),
\end{equation}
where $\kappa_{(\psi')^{-1}}$ stands for the specialization at the trivial character.

Here, by class field theory, \[ H^1(K_{\mathfrak p},F_p(\psi^{-1})) \simeq \Hom_{\cont}(K_{\mathfrak p}^{\times},K_{\mathfrak p}) \otimes F_p(\psi^{-1}), \] and $\loc_{\mathfrak p}(\kappa_{(\psi')^{-1}})$ corresponds to the evaluation at the inverse of a local uniformizer, $\pi_{\mathfrak p}^{-1/h}$. In particular, there is a non-canonical isomorphism with $F_p$. Under this isomorphism, the dual exponential map corresponds to the $\ord$ map. Then, we may identify an element of the space \[ \Hom_{\cont}(K_{\mathfrak p}^{\times},K_{\mathfrak p}) \otimes F_p(\psi^{-1}) \] with its image under evaluation at $\pi_{\mathfrak p}^{-1/h}$.

Therefore, combining Proposition \ref{form-ell-unit} with equation \eqref{eva1}, we get the following.

\begin{propo}
Assume that $\psi(\bar{\mathfrak p})=1$. Then, with the previous identifications,
\begin{equation}
\loc_{\mathfrak p}(\kappa_{(\psi')^{-1}}) = -(1-p^{-1})(1-\psi(\mathfrak p)p^{-1}) \times \log_p(u_{\psi}).
\end{equation}
\end{propo}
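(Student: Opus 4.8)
The plan is to combine the two immediately preceding results, Proposition \ref{form-ell-unit} and equation \eqref{eva1}, and then transport the resulting identity across the class field theory isomorphism described in the paragraph before this statement. First I would write down \eqref{eva1}, namely
\[
(1-p^{-1}) L_{\mathfrak p}'(K,\psi)(\chi_{\triv}) = -\log_p(\pi_{\mathfrak p}^{1/h}) \cdot \exp_{\BK}^*(\loc_{\mathfrak p}(\kappa_{(\psi')^{-1}})),
\]
and substitute into it the value of $L_{\mathfrak p}'(K,\psi)(\chi_{\triv})$ supplied by \eqref{katz-buscada}. After this substitution the factor $\log_p(\pi_{\mathfrak p}^{1/h})$ appears on both sides, and the essential calculation is simply to cancel it.

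The substitution gives
\[
(1-p^{-1}) \cdot \log_p(\pi_{\mathfrak p}^{1/h}) \cdot (1-\psi(\mathfrak p)p^{-1}) \cdot \log_p(u_{\psi}) = -\log_p(\pi_{\mathfrak p}^{1/h}) \cdot \exp_{\BK}^*(\loc_{\mathfrak p}(\kappa_{(\psi')^{-1}})).
\]
Dividing both sides by $\log_p(\pi_{\mathfrak p}^{1/h})$ — which is nonzero, since $\pi_{\mathfrak p}$ is not a root of unity times a power of $p$ and the Iwasawa logarithm is injective on the relevant subgroup — immediately yields
\[
\exp_{\BK}^*(\loc_{\mathfrak p}(\kappa_{(\psi')^{-1}})) = -(1-p^{-1})(1-\psi(\mathfrak p)p^{-1}) \cdot \log_p(u_{\psi}).
\]

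Finally I would invoke the identifications set up in the preceding paragraph: under the class field theory isomorphism $H^1(K_{\mathfrak p},F_p(\psi^{-1})) \simeq \Hom_{\cont}(K_{\mathfrak p}^{\times},K_{\mathfrak p}) \otimes F_p(\psi^{-1})$, the dual exponential map $\exp_{\BK}^*$ is identified with the $\ord$ map, and an element is identified with its value under evaluation at $\pi_{\mathfrak p}^{-1/h}$. Under these identifications the left-hand side $\exp_{\BK}^*(\loc_{\mathfrak p}(\kappa_{(\psi')^{-1}}))$ is precisely what we have agreed to denote $\loc_{\mathfrak p}(\kappa_{(\psi')^{-1}})$, so the displayed formula becomes the claimed statement.

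The step I expect to be the only genuine point requiring care is not any of the algebra — which is routine cancellation — but the bookkeeping of the identifications: one must make sure that the same normalization of the specialization map $\nu_{\kappa_1,\kappa_2}$ and the same identification of $\mathbb D_{\dR}$ with $F_p$ is used consistently in \eqref{eva1} and in the final evaluation-at-$\pi_{\mathfrak p}^{-1/h}$ convention, so that no stray factor or sign is introduced. Since both \eqref{eva1} and the target identification were set up in the same paragraph with the same conventions, this should go through cleanly, and the proof reduces to quoting Proposition \ref{form-ell-unit}, equation \eqref{eva1}, and cancelling the common factor $\log_p(\pi_{\mathfrak p}^{1/h})$.
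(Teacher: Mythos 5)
Your proposal is correct and is exactly the paper's argument: the paper derives this proposition by combining Proposition \ref{form-ell-unit} with equation \eqref{eva1}, cancelling the common nonzero factor $\log_p(\pi_{\mathfrak p}^{1/h})$, and reading the result through the stated identification of $\exp_{\BK}^*$ with evaluation/$\ord$ under the class field theory isomorphism. Your attention to the consistency of the normalizations, and your justification that $\log_p(\pi_{\mathfrak p}^{1/h})\neq 0$, are both sound and in line with the paper's conventions.
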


\subsection{Specialization at the character $\psi \mathcal N$}\label{11}

The motivation for this section comes from the following lemma.
\begin{lemma}\label{dos-fact}
The following relation between the unit $u_{\psi}$ and the specialization of $\kappa_{\psi,\infty}$ at the character $\psi \mathcal N$ holds:
\begin{equation}\label{dous-factores}
\log_p(\kappa_{\psi \mathcal N}) = (1-\psi^{-1}(\mathfrak p)) \cdot (1-\psi^{-1}(\bar{\mathfrak p})) \cdot \log_p(u_{\psi}).
\end{equation}
In particular, $\log_p(\kappa_{\psi \mathcal N})$ vanishes if and only if $\psi(\mathfrak p)=1$ or $\psi(\bar{\mathfrak p})=1$.
\end{lemma}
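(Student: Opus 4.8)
The plan is to combine the reciprocity law of Proposition~\ref{lareclaw} with the explicit Perrin-Riou interpolation formula of Proposition~\ref{aquest-perrin} and then feed in the special value formula from Katz's $p$-adic Kronecker limit formula. Concretely, I want to specialize the $\Lambda_K$-adic class $\kappa_{\psi,\infty}$ at the character $\psi\mathcal N$, which in the coordinates of Proposition~\ref{aquest-perrin} corresponds to the point of infinity type $(\kappa_1,\kappa_2)=(1,1)$ (so that $\mathcal N=\lambda\lambda'$ sits in the Bloch--Kato logarithm range $\kappa_2>0$). At this point the Perrin-Riou map $\mathcal L_{\mathfrak p}$ interpolates $\log_{\BK}$, and under the Kummer identification $t\cdot\log_{\BK}$ becomes the usual $p$-adic logarithm $\log_p$. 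Applying $\loc_{\mathfrak p}$ and the reciprocity law then relates $\log_p(\kappa_{\psi\mathcal N})$ to the special value $L_{\mathfrak p}(K,(\psi')^{-1})(\mathcal N)$.

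First I would write down the specialization $\nu_{1,1}(\mathcal L_{\mathfrak p})$ explicitly from Proposition~\ref{aquest-perrin}. With $\kappa_1=\kappa_2=1$ the Euler factor becomes
\[
\frac{1-\psi(\mathfrak p)\pi_{\mathfrak p}^{-1/h}\pi_{\bar{\mathfrak p}}^{-1/h}}{1-\frac{\psi^{-1}(\mathfrak p)\pi_{\mathfrak p}^{1/h}\pi_{\bar{\mathfrak p}}^{1/h}}{p}},
\]
and since $\pi_{\mathfrak p}^{1/h}\pi_{\bar{\mathfrak p}}^{1/h}$ is (up to roots of unity) the rational prime $p$, the numerator simplifies to $1-\psi(\mathfrak p)p^{-1}$ and the denominator to $1-\psi^{-1}(\mathfrak p)$. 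Next I would invoke the functional equation from ingredient~(c), namely $L_{\mathfrak p}(K,(\psi')^{-1})(\mathcal N)=L_{\mathfrak p}(K,\psi)(\chi_{\triv})$, together with the $\psi\neq 1$ branch of the Kronecker limit formula~\eqref{special}, which gives $L_{\mathfrak p}(K,\psi)(\chi_{\triv})=-(1-\psi^{-1}(\bar{\mathfrak p}))(1-\psi(\mathfrak p)p^{-1})\log_p(u_\psi)$. Substituting the reciprocity law $L_{\mathfrak p}(K,(\psi')^{-1})=\mathcal L_{\mathfrak p}(\loc_{\mathfrak p}(\kappa_{\psi,\infty}))$ and specializing both sides at $\psi\mathcal N$ should produce, after cancelling the common factor $(1-\psi(\mathfrak p)p^{-1})$ and tracking the sign conventions, the desired identity
\[
\log_p(\kappa_{\psi\mathcal N})=(1-\psi^{-1}(\mathfrak p))(1-\psi^{-1}(\bar{\mathfrak p}))\log_p(u_\psi).
\]
The final ``in particular'' clause is then immediate, since a product of $p$-adic numbers vanishes if and only if one of the factors does: $(1-\psi^{-1}(\mathfrak p))=0$ exactly when $\psi(\mathfrak p)=1$, and similarly for $\bar{\mathfrak p}$, while $\log_p(u_\psi)\neq 0$ because $u_\psi$ is a non-torsion unit (one would cite the non-degeneracy of elliptic units, or equivalently the non-vanishing in~\eqref{rel1}).

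The main obstacle I anticipate is bookkeeping rather than conceptual: correctly matching the normalizations between the reciprocity law (stated for $(\psi')^{-1}$), the functional equation, and the interpolation formula, and in particular verifying that the Euler factor in the numerator of $\nu_{1,1}(\mathcal L_{\mathfrak p})$ is precisely the factor $(1-\psi(\mathfrak p)p^{-1})$ that appears in the Kronecker limit formula so that it cancels cleanly, leaving only $(1-\psi^{-1}(\mathfrak p))(1-\psi^{-1}(\bar{\mathfrak p}))$. One subtlety to check carefully is that passing from $\mathcal L_{\mathfrak p}$ applied to $\loc_{\mathfrak p}(\kappa_{\psi,\infty})$ to $\log_p$ applied to the Kummer image $\kappa_{\psi\mathcal N}$ introduces exactly the factor $t^{\kappa_2}/\kappa_2!$ with $\kappa_2=1$ absorbed by the identification of $t\cdot\log_{\BK}$ with $\log_p$, so that no spurious constant survives. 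Once these normalizations are pinned down, the proof is a short chain of substitutions.
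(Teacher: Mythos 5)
Your proposal is correct and follows essentially the same route as the paper's own proof: it combines Yager's reciprocity law (Proposition \ref{lareclaw}) with the Perrin-Riou interpolation of Proposition \ref{aquest-perrin} at $(\kappa_1,\kappa_2)=(1,1)$ and the Kronecker limit formula, and your detour through the functional equation plus \eqref{special} is literally the same as the paper's citation of \eqref{special2} applied to $(\psi')^{-1}$, since \eqref{special2} is obtained from \eqref{special} by exactly that functional equation. The only point to watch is that when $\psi(\mathfrak p)=1$ the factor $1-\psi^{-1}(\mathfrak p)$ you divide by vanishes, so in that case one should clear denominators in the interpolation formula before specializing (which yields $\log_p(\loc_{\mathfrak p}(\kappa_{\psi\mathcal N}))=0$ directly, in agreement with the right-hand side being zero); the paper's proof leaves this implicit as well.
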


\begin{proof}
This follows after combining the $p$-adic Kronecker limit formula given in \eqref{special2}, now applied to $(\psi')^{-1}$, with Proposition \ref{lareclaw}. Observe that the factor $1-\psi(\mathfrak p)/p$ cancels out, and the same occurs for the two minus signs (the one coming from the Perrin-Riou map and that of the $p$-adic Kronecker limit formula).
\end{proof}

Further, since both $\kappa_{\psi \mathcal N}$ and $u_{\psi}$ are $\psi$-units, the $p$-adic logarithm defines an isomorphism and we may upgrade the previous lemma to an equality in $(\mathcal O_{K_{\mathfrak n}}^{\times} \otimes F)^{\psi}$. Observe that we are implicitly using that the localization map corresponds to the injection of global units inside local units, and this allows us to conclude that the global class $\kappa_{\psi \mathcal N}$ also vanishes when $\psi(\mathfrak p)=1$ or $\psi(\bar{\mathfrak p})=1$.

\vskip 12pt

We distinguish now three different situations.
\begin{enumerate}
\item [(a)] If $\psi(\mathfrak p)=1$, the cohomology class $\kappa_{\psi \mathcal N}=0$. One can construct a {\it derived} class whose derivative along the direction $\lambda$ is computed and expressed in terms of the unit $u_{\psi}$. If $\psi(\bar{\mathfrak p}) \neq 1$, the special value $L_{\mathfrak p}(K,\psi)(\mathcal N)$ does not vanish and it is related with the Bloch--Kato logarithm of the {\it derived} class.
\item [(b)] If $\psi(\bar{\mathfrak p})=1$, both $\kappa_{\psi \mathcal N}=0$ and $L_{\mathfrak p}(K,\psi)(\mathcal N)=0$. The logarithm of the {\it derived} local class $\loc_{\mathfrak p}(\kappa_{\psi \mathcal N}')$, once the value of $\kappa_2$ is fixed, can be expressed in terms of $L_{\mathfrak p}'(K,\psi)(\mathcal N)$.
\item [(c)] When $\psi(\mathfrak p)=\psi(\bar{\mathfrak p})=1$, both $\kappa_{\psi \mathcal N}=0$ and $L_{\mathfrak p}(K,\psi)(\mathcal N)=0$. Observe that this can be understood as an extension of the subcases (a) and (b). Then, $\kappa_{\psi \mathcal N}'=0$ too, and there is a notion of {\it second derivative} of the cohomology class, whose logarithm is related with $L_{\mathfrak p}'(K,\psi)(\mathcal N)$.
\end{enumerate}

{\bf Case (a).} Suppose that $\psi(\mathfrak p)=1$. Let $\mathcal C$ be the line of weight space obtained by taking the Zariski closure of all the points of the form $\mathcal N \lambda^t$ (that is, fixing $\kappa_2=1$). We are considering the $\mathbb C_p$-points as a rigid analytic space, and realizing all these characters as elements of this space. Analogously, define $\mathcal C'$ as the closure of the points of the form $\mathcal N \bar{\lambda}^t$. At the level of $p$-adic $L$-functions, and under the map of Proposition \ref{lareclaw}, the point $\mathcal N \lambda^t$ in $\mathcal C$ corresponds to $L_{\mathfrak p}(K,(\psi')^{-1})(\mathcal N \bar \lambda^t)$, or alternatively and via the functional equation, to $L_{\mathfrak p}(K,\psi)(\lambda^t)$.


In this situation of {\it exceptional zeros}, the Euler factor in the denominator of the Perrin-Riou map of Proposition \ref{aquest-perrin} vanishes, and hence one must carry out some new constructions to obtain a formula relating the special value of the $p$-adic $L$-function with an appropriate cohomology class. For that purpose, we can argue the existence of a {\it derived} cohomology class arising from elliptic units, which is directly related with the special value of the derivative of Katz's two variable $p$-adic $L$-function. Observe that derivatives of cohomology classes must be taken along $\mathcal C'$, since elsewhere the Euler factors are not analytic (and not even continuous!).

\begin{propo}\label{der-ell}
It holds that $\kappa_{\psi \mathcal N} = 0$, and there exists a derived cohomology class along $\mathcal C'$  \[ \kappa_{\gamma,\psi,\infty}' \in H^1(K, \Lambda_K \otimes F_p(\psi^{-1})(\mathcal N)_{|\mathcal C'}) \] satisfying that \[ \kappa_{\psi|\mathcal C'} = (\gamma-1) \kappa'_{\gamma, \psi,\infty}, \] where $\gamma$ is a fixed topological generator of $1+p\mathbb Z_p$.
\end{propo}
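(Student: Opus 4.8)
The plan is to mimic the one-variable argument of Proposition \ref{exc-circ} after restricting everything to the line $\mathcal C'$. First I would record the vanishing $\kappa_{\psi\mathcal N}=0$: by Lemma \ref{dos-fact} one has $\log_p(\kappa_{\psi\mathcal N}) = (1-\psi^{-1}(\mathfrak p))(1-\psi^{-1}(\bar{\mathfrak p}))\log_p(u_\psi)$, and the hypothesis $\psi(\mathfrak p)=1$ kills the factor $(1-\psi^{-1}(\mathfrak p))$; since the $p$-adic logarithm is injective on $\psi$-units (as observed just after Lemma \ref{dos-fact}), $\kappa_{\psi\mathcal N}$ itself vanishes.

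For the derived class the idea is to reduce the two-variable situation to a one-variable one. Restricting the tautological $\Lambda_K$-module to $\mathcal C'$ (the closure of the points $\mathcal N\bar\lambda^t$, i.e.\ fixing the $\lambda$-component and letting the $\bar\lambda=\lambda'$-component vary) produces a module $M := \Lambda_K\otimes F_p(\psi^{-1})(\mathcal N)_{|\mathcal C'}$ which is free of rank one over the one-variable Iwasawa algebra $\Lambda_{\mathcal C'}\cong \mathbb Z_p[[1+p\mathbb Z_p]]$, with $\gamma$ acting through the chosen topological generator. The point $\psi\mathcal N$ corresponds to the augmentation $t=0$, so specialization there realizes $F_p(\psi^{-1})(\mathcal N)$ as the cokernel of multiplication by $\gamma-1$, yielding the short exact sequence of $G_K$-modules
\[ 0 \longrightarrow M \xrightarrow{\ \gamma-1\ } M \longrightarrow F_p(\psi^{-1})(\mathcal N) \longrightarrow 0. \]

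Taking $G_K$-cohomology and using $H^0(K, F_p(\psi^{-1})(\mathcal N))=0$ (the character $\psi^{-1}\mathcal N$ is nontrivial, as the norm part cannot be cancelled by the finite-order $\psi^{-1}$), the long exact sequence begins
\[ 0 \longrightarrow H^1(K, M) \xrightarrow{\ \gamma-1\ } H^1(K, M) \longrightarrow H^1(K, F_p(\psi^{-1})(\mathcal N)). \]
The restriction $\kappa_{\psi|\mathcal C'}\in H^1(K,M)$ maps to $\kappa_{\psi\mathcal N}=0$ in the last term, hence lies in the image of $\gamma-1$; injectivity of $\gamma-1$ on $H^1(K,M)$ (which is exactly the consequence of $H^0=0$) then produces a unique preimage $\kappa'_{\gamma,\psi,\infty}$ with $\kappa_{\psi|\mathcal C'}=(\gamma-1)\kappa'_{\gamma,\psi,\infty}$, as claimed. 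Here the derived class genuinely depends on $\gamma$, in contrast to the $\log_p(\gamma)$-normalized version of Proposition \ref{exc-circ}, which accounts for the subscript in the notation.

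The homological part is then formal. The step I expect to carry the real content, and which I would be most careful about, is justifying the restriction-to-$\mathcal C'$ operation: namely that pulling back the tautological module along the closed immersion $\mathcal C'\hookrightarrow\Spf\Lambda_K$ gives a free rank-one $\Lambda_{\mathcal C'}$-module on which $\gamma-1$ is a non-zero-divisor with cokernel exactly the $\psi\mathcal N$-specialization, so that the displayed short exact sequence is legitimate and compatible with the specialization of $\kappa_{\psi,\infty}$ recalled in Section \ref{eu}.
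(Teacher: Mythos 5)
Your proposal is correct and follows essentially the same route as the paper: vanishing of $\kappa_{\psi\mathcal N}$ via Lemma \ref{dos-fact} together with the injectivity of $\log_p$ on $\psi$-units, then the short exact sequence $0 \to M \xrightarrow{\gamma-1} M \to F_p(\psi^{-1})(\mathcal N) \to 0$ restricted to $\mathcal C'$, the long exact sequence in cohomology, and $H^0(K,F_p(\psi^{-1})(\mathcal N))=0$ to get injectivity of $\gamma-1$ and hence a unique preimage. Your extra care in justifying the restriction-to-$\mathcal C'$ step (freeness of rank one over the one-variable algebra, with $\gamma-1$ a non-zero-divisor whose cokernel is the $\psi\mathcal N$-specialization) is a point the paper leaves implicit, but it is the same argument.
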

\begin{proof}
The vanishing of the local class $\loc_{\mathfrak p}(\kappa_{\psi \mathcal N})$ directly follows from Lemma \ref{dos-fact} and the discussion after it.

The construction of the derived class follows the same argument than in the case of circular units explained in Section 2, and which is already present in the work of Bley \cite{Bl}. After fixing a topological generator $\gamma$ of $1+p\mathbb Z_p$, one may consider the short exact sequence of $\mathbb Z_p$-modules \[ 0 \rightarrow \Lambda_K \otimes F_p(\psi^{-1})(\mathcal N)_{|\mathcal C'} \xrightarrow{\gamma-1} \Lambda_K \otimes F_p(\psi^{-1})(\mathcal N)_{|\mathcal C'} \rightarrow F_p(\psi^{-1})(\mathcal N) \rightarrow 0 \] which induces a long exact sequence in cohomology. Since $H^0(K, F_p(\psi^{-1})(\mathcal N))=0$, \[ 0 \rightarrow H^1(K, \Lambda_K \otimes F_p(\psi^{-1})(\mathcal N)_{|\mathcal C'}) \xrightarrow{\gamma-1} H^1(K, \Lambda_K \otimes F_p(\psi^{-1})(\mathcal N)_{|\mathcal C'}) \xrightarrow{\mathcal N} H^1(K, F_p(\psi^{-1})(\mathcal N)). \]

As we have just seen, the image of $\kappa_{\psi,\infty}$ under the map $\mathcal N$ vanishes, and hence there exists a unique \[ \kappa_{\gamma,\psi,\infty}' \in H^1(K,\Lambda_K \otimes F_p(\psi^{-1})(\mathcal N)_{|\mathcal C'}) \] such that \[ \frac{\gamma-1}{\log_p (\gamma)} \times \kappa_{\gamma,\psi,\infty}'  = \kappa_{\gamma,\psi,\infty}. \]
\end{proof}

\begin{remark}
If we normalize dividing by $\log_p(\gamma)$, the specialization of the resulting class at the character $\psi \mathcal N$ does not depend on $\gamma$ (see \cite[Section 3]{Buy}). We define $\kappa_{\psi,\infty}':=\frac{\kappa_{\gamma,\psi,\infty}'}{\log_p(\gamma)}$.
\end{remark}

We can relate the cohomology class with $L_{\mathfrak p}(K)(\cdot)$ at $\psi \mathcal N$.
\begin{defi}
Let $\mathcal E_{\mathfrak p}(K,(\psi')^{-1})(\cdot)$ stand for the function defined over the set of characters of the form $\lambda^{\kappa_1} \bar \lambda$ (and then extended to their Zariski closure over weight space), and given by
\begin{equation}\label{1exp}
\mathcal E_{\mathfrak p}(K,(\psi')^{-1})(\lambda^{\kappa_1}\bar \lambda) = \Big( 1-\frac{\pi_{\mathfrak p}^{1/h} \pi_{\bar{\mathfrak p}}^{\kappa_1/h}}{p} \Big) L_{\mathfrak p}(K,(\psi')^{-1})(\lambda^{\kappa_1} \bar \lambda).
\end{equation}
\end{defi}

\begin{lemma}
The function $\mathcal E_{\mathfrak p}(K,(\psi')^{-1})$ satisfies that for a character $\eta$ of infinity type $(\kappa_1,1)$
\begin{equation}\label{2exp}
\mathcal E_{\mathfrak p}(K)(\psi \eta) =  -\Big(1-\pi_{\mathfrak p}^{-1/h} \pi_{\bar{\mathfrak p}}^{-\kappa_1/h} \Big) \cdot t \log_{\BK}(\loc_{\mathfrak p}(\kappa_{\psi \eta})).
\end{equation}
In particular, the function $\mathcal E_{\mathfrak p}(K,(\psi')^{-1})$ is zero at the norm character.
\end{lemma}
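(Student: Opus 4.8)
The plan is to unwind the definition \eqref{1exp} of $\mathcal E_{\mathfrak p}(K,(\psi')^{-1})$ using Yager's explicit reciprocity law (Proposition \ref{lareclaw}) together with the explicit specialization of the two-variable Perrin-Riou regulator recorded in Proposition \ref{aquest-perrin}, exploiting the fact that the Euler factor built into $\mathcal E_{\mathfrak p}$ is precisely the one that cancels the (otherwise vanishing) denominator of the regulator. The identity is stated at characters $\eta$ of integer infinity type $(\kappa_1,1)$, so it suffices to argue at these classical points where Proposition \ref{aquest-perrin} applies directly.

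First I would substitute Proposition \ref{lareclaw} into \eqref{1exp}, writing $L_{\mathfrak p}(K,(\psi')^{-1})$ as $\mathcal L_{\mathfrak p}(\loc_{\mathfrak p}(\kappa_{\psi,\infty}))$ and specializing at $\eta$ of infinity type $(\kappa_1,1)$, i.e. fixing $\kappa_2=1$. Since $\kappa_2=1>0$, the relevant branch of Proposition \ref{aquest-perrin} is the Bloch--Kato logarithm one, and the scalar $\tfrac{(-t)^{\kappa_2}}{\kappa_2!}$ specializes to $-t$. Invoking the standing hypothesis $\psi(\mathfrak p)=1$ (so also $\psi^{-1}(\mathfrak p)=1$), the numerator and denominator of $\nu_{\kappa_1,1}(\mathcal L_{\mathfrak p})$ become $1-\pi_{\mathfrak p}^{-1/h}\pi_{\bar{\mathfrak p}}^{-\kappa_1/h}$ and $1-p^{-1}\pi_{\mathfrak p}^{1/h}\pi_{\bar{\mathfrak p}}^{\kappa_1/h}$ respectively. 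The key observation is that this last denominator is exactly the Euler factor appearing in \eqref{1exp}, so multiplying by it cancels it on the nose; what survives is $-(1-\pi_{\mathfrak p}^{-1/h}\pi_{\bar{\mathfrak p}}^{-\kappa_1/h})\,t\log_{\BK}(\loc_{\mathfrak p}(\kappa_{\psi\eta}))$, which is the asserted identity \eqref{2exp}.

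For the final sentence I would specialize to the norm character, i.e. take $\kappa_1=1$ so that $\eta=\mathcal N$ and $\psi\eta=\psi\mathcal N$. Since $\psi(\mathfrak p)=1$, Lemma \ref{dos-fact} and the discussion following it give $\loc_{\mathfrak p}(\kappa_{\psi\mathcal N})=0$; hence the right-hand side of \eqref{2exp} vanishes and $\mathcal E_{\mathfrak p}(K,(\psi')^{-1})$ is zero at the norm character. The step requiring the most care is bookkeeping rather than genuine difficulty: I must track the $\kappa_1\leftrightarrow\kappa_2$ transposition in the specialization map $\nu_{\kappa_1,\kappa_2}$ and the functional equation \eqref{fe} relating the evaluation of $\mathcal E_{\mathfrak p}(K,(\psi')^{-1})$ at $\eta$ to that of $\mathcal E_{\mathfrak p}(K)$ at $\psi\eta$, so that the character at which each $L$-value, cohomology class, and Euler factor is taken matches up correctly. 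In particular one should confirm that it is exactly the hypothesis $\psi(\mathfrak p)=1$ that makes the regulator's denominator coincide with the factor defining $\mathcal E_{\mathfrak p}$ (rather than differing by $\psi^{-1}(\mathfrak p)$), since otherwise the cancellation would be incomplete.
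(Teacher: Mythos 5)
Your proposal is correct and follows essentially the same route as the paper: the identity \eqref{2exp} is obtained by feeding Yager's reciprocity law (Proposition \ref{lareclaw}) and the explicit $\kappa_2=1$ specialization of the Perrin-Riou regulator (Proposition \ref{aquest-perrin}) into the definition \eqref{1exp}, with the standing hypothesis $\psi(\mathfrak p)=1$ making the Euler factor in \eqref{1exp} cancel the regulator's denominator exactly, as you note. The vanishing at the norm character likewise follows, as in the paper, from the vanishing of $\kappa_{\psi \mathcal N}$ established in Lemma \ref{dos-fact} together with the fact that at infinity type $(\kappa_1,1)$ the regulator interpolates the Bloch--Kato logarithm.
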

\begin{proof}
The first part follows from Proposition \ref{lareclaw}. The second statement is due to the vanishing of the specialization of the cohomology class at $\psi \mathcal N$ (recall that for characters of infinity type $(\kappa_1,1)$ the Bloch--Kato logarithm interpolates the usual logarithm map).
\end{proof}

This is sufficient to prove Theorem \ref{teo1}. Recall again that we may identify the Bloch--Kato logarithm with the usual $p$-adic logarithm under Kummer's isomorphism.

\begin{remark}
For the following result, we need to use that $(\log_{\BK}(\kappa))'=\log_{\BK}(\kappa')$. This can be easily seen by considering the natural isomorphism between the weight space and $\mathbb Z_p[[X]]$. Then, the class $\kappa$ corresponds to a function $f$ vanishing at $0$, and hence there is another function $g$ such that $f=X \cdot g$. The Bloch--Kato logarithm is a linear morphism between a $\mathbb Z_p[[X]]$-module and a field embedded in $\mathbb C_p$, that we may denote with the letter $\Phi$. Then, \[ \Phi(f)'|_{X=0} = \Phi(g)|_{X=0}, \] as desired.
\end{remark}

\begin{theorem}
It holds that \[ \log_p(\pi_{\mathfrak p}^{1/h}) \cdot L_{\mathfrak p}(K,(\psi')^{-1})(\mathcal N) = -(1-p^{-1}) \cdot \log_p(\loc_{\mathfrak p}(\kappa'_{\psi \mathcal N})). \] Moreover, \[\kappa_{\psi \mathcal N}' = \log_p(\pi_{\mathfrak p}^{1/h}) \cdot (1-\psi^{-1}(\bar{\mathfrak p})) \cdot u_{\psi}, \] where $\loc_{\mathfrak p}$ stands here for the composition of the Kummer map with localization at $\mathfrak p$.
\end{theorem}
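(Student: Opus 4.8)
The plan is to establish the two displayed equalities in sequence, the first being essentially Proposition \ref{main2} from the introduction, and the second following by combining it with the functional equation and Katz's Kronecker limit formula. First I would prove the reciprocity-law statement
\[
\log_p(\pi_{\mathfrak p}^{1/h}) \cdot L_{\mathfrak p}(K,(\psi')^{-1})(\mathcal N) = -(1-p^{-1}) \cdot \log_p(\loc_{\mathfrak p}(\kappa'_{\psi \mathcal N})).
\]
To do this I would start from the interpolation formula of the previous Lemma, namely
\[
\mathcal E_{\mathfrak p}(K)(\psi \eta) = -\Big(1-\pi_{\mathfrak p}^{-1/h}\pi_{\bar{\mathfrak p}}^{-\kappa_1/h}\Big)\cdot t\log_{\BK}(\loc_{\mathfrak p}(\kappa_{\psi\eta})),
\]
valid along the line $\mathcal C'$ of characters $\lambda^{\kappa_1}\bar\lambda$ of infinity type $(\kappa_1,1)$, where $\kappa_2=1$ is fixed so that the relevant Euler factors remain analytic (and the Bloch--Kato logarithm genuinely interpolates the usual logarithm). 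Both sides vanish at the norm character $\eta=\mathcal N$ (i.e.\ $\kappa_1=1$): the right-hand side because $\loc_{\mathfrak p}(\kappa_{\psi\mathcal N})=0$ by Lemma \ref{dos-fact} under $\psi(\mathfrak p)=1$, and the left because the definition of $\mathcal E_{\mathfrak p}$ absorbs the remaining Euler factor.

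Since both sides of this relation vanish at $\kappa_1=1$, I would differentiate with respect to $\kappa_1$ and evaluate at $\kappa_1=1$. The crucial technical input is the remark preceding the theorem, which asserts that $(\log_{\BK}(\kappa))' = \log_{\BK}(\kappa')$: differentiating the cohomology class and then applying the (linear) logarithm commutes, because a class vanishing at the base point factors as $(\gamma-1)$ times the derived class, and $\Phi(Xg)'|_{X=0}=\Phi(g)|_{X=0}$. Thus the derivative of the right-hand side produces $\log_{\BK}(\loc_{\mathfrak p}(\kappa'_{\psi\mathcal N}))$ up to the surviving Euler factor $-(1-p^{-1})$ (the factor $1-\pi_{\mathfrak p}^{-1/h}\pi_{\bar{\mathfrak p}}^{-\kappa_1/h}$ evaluated appropriately), while the derivative of $t\log_{\BK}$ against the analytic Euler factor contributes the $\log_p(\pi_{\mathfrak p}^{1/h})$ coming from $\frac{d}{d\kappa_1}\pi_{\bar{\mathfrak p}}^{-\kappa_1/h}=-\tfrac1h\log_p(\pi_{\bar{\mathfrak p}})\,\pi_{\bar{\mathfrak p}}^{-\kappa_1/h}$, together with the identification $\log_p(\pi_{\bar{\mathfrak p}}^{1/h})=-\log_p(\pi_{\mathfrak p}^{1/h})$. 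Matching the two derivatives, after identifying $t\log_{\BK}$ with the Iwasawa logarithm under the Kummer isomorphism, yields the first displayed equality.

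For the second equality I would chain together the functional equation $L_{\mathfrak p}(K,(\psi')^{-1})(\mathcal N)=L_{\mathfrak p}(K,\psi)(\chi_{\triv})$ with Katz's Kronecker limit formula \eqref{special}, which under $\psi(\mathfrak p)=1$ reads
\[
L_{\mathfrak p}(K,\psi)(\chi_{\triv}) = -(1-\psi^{-1}(\bar{\mathfrak p}))(1-\psi(\mathfrak p)p^{-1})\cdot\log_p(u_{\psi}) = -(1-\psi^{-1}(\bar{\mathfrak p}))(1-p^{-1})\cdot\log_p(u_{\psi}).
\]
Substituting into the first equality, the factors $(1-p^{-1})$ cancel and the minus signs combine, giving $\log_p(\loc_{\mathfrak p}(\kappa'_{\psi\mathcal N})) = \log_p(\pi_{\mathfrak p}^{1/h})\cdot(1-\psi^{-1}(\bar{\mathfrak p}))\cdot\log_p(u_{\psi})$. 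Finally, since $\kappa'_{\psi\mathcal N}$ and $u_{\psi}$ are both $\psi$-units and the $p$-adic logarithm is injective on the relevant $\psi$-isotypic component (the localization map at $\mathfrak p$ being the injection of global units into local ones), I would upgrade this identity of logarithms to the stated equality in $(\mathcal O_{K_{\mathfrak n}}^{\times}\otimes F)^{\psi}$. The main obstacle I anticipate is the differentiation step: one must be scrupulous about which Euler factor survives and which is differentiated, about the sign and the factor $1/h$ emerging from $\frac{d}{d\kappa_1}\pi_{\bar{\mathfrak p}}^{-\kappa_1/h}$, and about the commutation $(\log_{\BK}\kappa)'=\log_{\BK}\kappa'$, since a careless bookkeeping here would corrupt the constant $\log_p(\pi_{\mathfrak p}^{1/h})$ that is the whole point of the $\mathcal L$-invariant.
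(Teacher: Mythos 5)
Your treatment of the first displayed equality, and of the $L$-value computation, is exactly the paper's route: differentiate \eqref{1exp} and \eqref{2exp} with respect to $\kappa_1$ and evaluate at $\kappa_1=1$, where the Euler factor (respectively the class) vanishes, using the remark $(\log_{\BK}\kappa)'=\log_{\BK}(\kappa')$ and the identification of $t\log_{\BK}$ with the Iwasawa logarithm; and your chain ``functional equation plus Katz's formula \eqref{special}'' is literally how the paper obtains \eqref{special2}, which is what its proof invokes. Your sign and constant bookkeeping is correct.

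However, there is a genuine gap in your final step, where you assert that ``$\kappa'_{\psi\mathcal N}$ and $u_{\psi}$ are both $\psi$-units and the $p$-adic logarithm is injective on the relevant $\psi$-isotypic component.'' For $u_{\psi}$ this is true by construction, but for the derived class it is precisely the point that requires proof, and it is where the exceptional-zero hypothesis works against you. Under $\psi(\mathfrak p)=1$, the natural home of $\kappa'_{\psi\mathcal N}$ after Kummer theory is the $\psi$-part of the \emph{$p$-units}, not of the units, and that space is strictly larger: the condition $\psi(\mathfrak p)=1$ produces an extra $\mathfrak p$-unit in the $\psi$-component (the same phenomenon recorded in Section \ref{circulars} for circular units, where the Selmer group becomes two-dimensional). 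On this two-dimensional space the functional $\log_p\circ\loc_{\mathfrak p}$, taking values in the one-dimensional $F_p$, cannot be injective, so an equality of logarithms does not determine the class; note also that the injectivity argument you cite (localization as the inclusion of global units into local units) was used in the discussion after Lemma \ref{dos-fact} for $\kappa_{\psi\mathcal N}$, which is known to be the Kummer image of a global unit, whereas the derived class is constructed purely cohomologically by dividing by $\gamma-1$. The paper closes this hole with an argument absent from your proposal: since the derivative is taken along $\mathcal C'$, i.e.\ along the $\bar{\mathfrak p}$-ramified extension $H'_{\infty}/K$, the derived class can have non-trivial valuation only at $\bar{\mathfrak p}$; when $\psi(\bar{\mathfrak p})\neq 1$ there are no extra $\bar{\mathfrak p}$-units in the $\psi$-component, so $\kappa'_{\psi\mathcal N}$ is a genuine unit and the logarithm is injective on the (one-dimensional) $\psi$-part of the unit group; in the remaining case $\psi(\mathfrak p)=\psi(\bar{\mathfrak p})=1$ the paper must appeal to the arguments of Bley \cite{Bl}. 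Without this step, or some substitute for it, the upgrade from an identity of logarithms to the stated identity of elements does not go through.
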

\begin{proof}
The first part follows by considering the derivative of the function $\mathcal E_{\mathfrak p}(K,(\psi')^{-1})$ using both \eqref{1exp} and \eqref{2exp}. That is, the left hand side is the result of deriving \eqref{1exp} and evaluating at $\kappa_1=1$, where the Euler factor vanishes; similarly, the right hand side is the result of deriving \eqref{2exp} and evaluating again at $\kappa_1=1$, where the class vanishes. Observe that we have identified $t \log_{\BK}$ with the $p$-adic logarithm, as usual.

The second part directly follows after comparing the first statement with the special value formula given in \eqref{special2}. Further, since we are taking a derivative along the $\bar{\mathfrak p}$-ramified extension $H_{\infty}'/K$, the resulting {\it derived} class could only have non-zero valuation at the prime $\bar{\mathfrak p}$. This cannot be the case when $\psi(\bar{\mathfrak p}) \neq 1$ since there are no extra $\bar{\mathfrak p}$-units, and the $p$-adic logarithm is therefore an isomorphism for the $\psi$-component of the unit group. If $\psi(\mathfrak p)=\psi(\bar{\mathfrak p})=1$ the same conclusion follows by recasting to the arguments of \cite{Bl}. We discuss this situation in more detail in the following sections.
\end{proof}

This result slightly differs from the work of Bley \cite{Bl}. There, the author takes the derivative along the direction $\mathcal C$, which corresponds to the $\mathbb Z_p$-extension which is ramified at the prime $\mathfrak p$. In that case, the derivative of the Perrin-Riou map agrees with the {\it order} (and not with the logarithm), which is coherent with the fact that when $\psi(\mathfrak p)=1$ there is a $\mathfrak p$-unit in the $\psi$-component. Hence, our results are coherent with the computations of \cite[Section 6.4]{Buy} relating the valuation of the extra $\mathfrak p$-unit with the logarithm of the elliptic unit $u_{\psi}$.

\vskip 12pt

{\bf Case (b).} Assume now that $\psi(\bar{\mathfrak p})=1$. Observe that via the functional equation for Katz's two-variable $L$-function, this leads to \[ L_{\mathfrak p}(K,(\psi')^{-1})(\mathcal N) = 0, \] and $L_{\mathfrak p}'(K,(\psi')^{-1})(\mathcal N)$ can be related with the {\it derived} cohomology class constructed in the previous section. We recall that the derivative of the $p$-adic $L$-function always means derivative along the $H_{\infty}$ direction. As we did in Proposition \ref{der-ell}, we may take the class \[ \kappa_{\psi,\infty}' \in H^1(K, \Lambda_K \otimes F_p(\psi^{-1})(\mathcal N)_{|\mathcal C'}). \]

\begin{propo}\label{2b}
Assume that $\psi(\bar{\mathfrak p})=1$. Then,
\begin{equation}
(1-\psi^{-1}(\mathfrak p)) \cdot L_{\mathfrak p}'(K,(\psi')^{-1})(\mathcal N) = -(1-\psi(\mathfrak p)p^{-1}) \cdot  \log_p(\loc_{\mathfrak p}(\kappa_{\psi \mathcal N}')).
\end{equation}
\end{propo}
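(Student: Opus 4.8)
The plan is to run the argument of the Case (a) theorem, but with the two exceptional-zero mechanisms exchanged: here the vanishing of $L_{\mathfrak p}(K,(\psi')^{-1})$ at $\mathcal N$ comes from the special-value factor $1-\psi^{-1}(\bar{\mathfrak p})$ of the Kronecker limit formula, not from a vanishing numerator of the regulator, and the Perrin-Riou Euler factor is now harmless. First I would make the reciprocity law of Proposition \ref{lareclaw} explicit via Proposition \ref{aquest-perrin} and restrict both sides to the line $\mathcal C'$ along which the derived class $\kappa_{\psi,\infty}'$ of Proposition \ref{der-ell} lives. Along $\mathcal C'$ one has $\kappa_2=1>0$, so the regulator interpolates $t\log_{\BK}$, which under Kummer's identification is $\log_p$; and the relevant Euler factor specializes at $\mathcal N$ to $\tfrac{1-\psi(\mathfrak p)p^{-1}}{1-\psi^{-1}(\mathfrak p)}$, whose numerator and denominator are both nonzero under the standing Case (b) hypothesis (with $\psi(\mathfrak p)\neq 1$, the overlap with Case (c) being excluded).

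Next I would record that at the point $\mathcal N$ \emph{both} sides of this restricted identity vanish: the cohomology class because $\kappa_{\psi\mathcal N}=0$ by Lemma \ref{dos-fact} (the factor $1-\psi^{-1}(\bar{\mathfrak p})$ is zero), and the $L$-value because $L_{\mathfrak p}(K,(\psi')^{-1})(\mathcal N)=L_{\mathfrak p}(K)(\psi)=0$ by \eqref{special} combined with the functional equation \eqref{fe}, again through $1-\psi^{-1}(\bar{\mathfrak p})=0$. Since $\kappa_{\psi\mathcal N}=0$, the class factors through the derived class exactly as in Proposition \ref{der-ell}, and I can differentiate the restricted reciprocity law at $\mathcal N$. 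Because the class vanishes at the centre, the Leibniz cross term carrying the derivative of the (nonvanishing) Euler factor drops out; using the identity $(\log_{\BK}(\kappa))'=\log_{\BK}(\kappa')$ from the preceding remark, one is left with the single term $\tfrac{1-\psi(\mathfrak p)p^{-1}}{1-\psi^{-1}(\mathfrak p)}\cdot(-t)\log_{\BK}(\loc_{\mathfrak p}(\kappa_{\psi\mathcal N}'))$. Clearing the denominator $1-\psi^{-1}(\mathfrak p)$ to the left-hand side produces precisely the two coefficients $1-\psi^{-1}(\mathfrak p)$ and $1-\psi(\mathfrak p)p^{-1}$ of the statement.

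The last step, and the one I expect to be the main obstacle, is to identify the derivative that naturally emerges from this computation — which is taken along $\mathcal C'$, i.e.\ along the $\bar{\mathfrak p}$-ramified direction $\lambda'$ — with $L_{\mathfrak p}'(K,(\psi')^{-1})(\mathcal N)$, which by our running convention is the derivative along $H_\infty$ (the $\lambda$ direction). This matching must be effected through the functional equation \eqref{fe}, which interchanges $\mathfrak p$ with $\bar{\mathfrak p}$ and hence $\lambda$ with $\lambda'$, and it forces one to track carefully both the resulting sign and the index conventions of $\nu_{\kappa_1,\kappa_2}$ (where the specialization twists oppositely to the infinity type). As in Case (a), this is where the argument is delicate: an error in the direction of differentiation or in the normalization of the derived class would propagate directly into a wrong constant, so this reconciliation — rather than the differentiation itself — is the genuinely load-bearing part of the proof.
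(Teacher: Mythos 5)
Your first two paragraphs reproduce exactly the paper's (very terse) proof: differentiate the reciprocity law of Proposition \ref{lareclaw}, made explicit through Proposition \ref{aquest-perrin}, with respect to $\kappa_1$ at $\kappa_1=1$ with $\kappa_2=1$ fixed; both the $L$-value (by the Kronecker limit formula plus \eqref{fe}) and the class (by Lemma \ref{dos-fact}) vanish at the centre, so the Leibniz cross term drops, and clearing the factor $1-\psi^{-1}(\mathfrak p)$ gives the stated identity. Your evaluation of the Euler factor at $\mathcal N$ as $\tfrac{1-\psi(\mathfrak p)p^{-1}}{1-\psi^{-1}(\mathfrak p)}$, and the sign coming from $(-t)^{\kappa_2}\log_{\BK}$ at $\kappa_2=1$, are both correct. (One small improvement: if you clear the denominator of the Euler factor \emph{before} differentiating, the same computation goes through even when $\psi(\mathfrak p)=1$, since then both cross terms still vanish; so the exclusion of Case (c) is a convenience, not a necessity.)

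The issue is your third paragraph: the ``reconciliation'' you single out as the load-bearing step does not exist, and effecting it through the functional equation would actually introduce an error. The point you are missing is that the direction-matching is already built into the twist-swap of the specialization maps: by Proposition \ref{aquest-perrin}, evaluating the reciprocity law at the point $\eta=\lambda^{\kappa_1}(\lambda')^{\kappa_2}$ pairs the value $L_{\mathfrak p}(K,(\psi')^{-1})(\eta)$ with the class carrying the \emph{conjugate} twist $\lambda^{\kappa_2}(\lambda')^{\kappa_1}$. Thus, fixing $\kappa_2=1$ and varying $\kappa_1$, the class moves along $\mathcal C'$ (twists $\mathcal N\bar\lambda^{\kappa_1-1}$), which is precisely why the derived class of Proposition \ref{der-ell} is the right object, while the argument of $L_{\mathfrak p}(K,(\psi')^{-1})$ moves along $\mathcal N\lambda^{\kappa_1-1}$, i.e.\ along the $\lambda$ ($H_\infty$) direction. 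So the derivative that emerges on the $L$-side is already $L_{\mathfrak p}'(K,(\psi')^{-1})(\mathcal N)$ in the paper's standing convention, and the proof is finished at the end of your second paragraph. The functional equation \eqref{fe} never enters the matching of directions; it is only used to see that $L_{\mathfrak p}(K,(\psi')^{-1})(\mathcal N)=L_{\mathfrak p}(K)(\psi)=0$, and, in the paper's discussion \emph{after} the proposition, to reinterpret $L_{\mathfrak p}'(K,(\psi')^{-1})(\mathcal N)$ as (minus) a derivative of $L_{\mathfrak p}(K,\psi)$ at the trivial character along $\lambda'$ --- an interpretation of the result, not a step of its proof. Had you inserted the functional-equation conversion you describe, you would have ``corrected'' a quantity that was already the correct one.
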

\begin{proof}
This directly follows by considering the derivative with respect to the variable $\kappa_1$ in the reciprocity law of Proposition \ref{lareclaw} when $\kappa_2$ is fixed.
\end{proof}

The derivative $L_{\mathfrak p}'(K,\psi)(\mathcal N)$ is related, via the funcional equation, with the derivative at the trivial character along the direction $\lambda'$, and a priori we do not know any expression for that value in terms of $u_{\psi}$, which would allow us to prove an analogue of Theorem \ref{teo1} in this setting. Further, in this case the derived class $\kappa_{\psi \mathcal N}'$ is no longer a unit, but a $\bar{\mathfrak p}$-unit, and the $p$-adic logarithm is not sufficient to characterize $\kappa_{\psi \mathcal N}'$ (one needs to use the information about its $p$-adic valuation). It would be nice to understand these results in the framework provided by \cite[Section 4]{BD}, and we hope to come back to this issue in forthcoming work.

In the framework of circular units, Gross' factorization formula \cite{Gro}, combined with the results of the previous section, allows us to express, after considering the identifications provided by Kummer's isomorphisms, the class $\kappa_{\psi \mathcal N}'$ as a linear combination of a circular unit and an elliptic unit. We discuss this in Section \ref{caso}

\begin{remark}
Following Proposition \ref{2b}, we can extend our computations for the derivative of the cohomology class to arbitrary directions. Indeed, we may consider the function $L_{\bar{\mathfrak p}}(K)(\cdot)$, thus complementing our picture.
\begin{itemize}
\item When $\psi(\mathfrak p)=1$, $L_{\bar{\mathfrak p}}(K)(\cdot)$ vanishes at $\psi \mathcal N$ and in this case we can relate {\it the derivative of $\loc_{\mathfrak p}(\kappa_{\psi \mathcal N})$ along the direction $\mathcal C$} with {\it the derivative of $L_{\bar {\mathfrak p}}(K)$ at $\psi$ along the $(-1,0)$ direction}.
\item When $\psi(\bar{\mathfrak p})=1$, {\it the derivative of $\loc_{\mathfrak p}(\kappa_{\psi \mathcal N})$ along the direction $\mathcal C$} can be explicitly expressed in terms of $L_{\bar{\mathfrak p}}(\psi \mathcal N)$.
\end{itemize}
\end{remark}

\vskip 12pt

{\bf Case (c).} When $\psi(\bar{\mathfrak p}) = \psi(\mathfrak p)=1$, we expect that the cohomology class $\kappa_{\psi \mathcal N}'$ vanishes since $L_{\mathfrak p}(K,\psi)(\mathcal N)=0$. More precisely, and due to the symmetry between both directions $\lambda$ and $\lambda'$, we may combine the results of previous sections with \cite[Theorem 6.13]{Buy} to conclude that the class $\kappa_{\psi \mathcal N}'$ is zero.

When this happens, we may take a {\it second} derivative of the cohomology class \[ \kappa_{\gamma,\psi,\infty}'' \in H^1(K, \Lambda_K \otimes F_p(\psi^{-1})(\mathcal N)_{|\mathcal C'}). \] Now, we normalize the class dividing by $\log_p(\gamma)^2$ in such a way that its value at the character $\psi \mathcal N$, $\kappa_{\psi \mathcal N}''$, does not depend on $\gamma$. Proceeding in the same way as before, we may obtain the following result.

\begin{propo}\label{2.c}
Assume that $\psi(\bar{\mathfrak p})=\psi(\mathfrak p)=1$. Then,
\begin{equation}
\log_p(\pi_{\mathfrak p}^{1/h}) \cdot L_{\mathfrak p}'(K,(\psi')^{-1})(\mathcal N) = -2(1-p^{-1}) \cdot \log_p(\loc_{\mathfrak p}(\kappa_{\psi \mathcal N}'')).
\end{equation}
\end{propo}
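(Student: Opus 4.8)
The plan is to run the argument of Theorem~\ref{teo1} one order higher, since under $\psi(\mathfrak{p})=\psi(\bar{\mathfrak{p}})=1$ the first-order data on both sides of the reciprocity law vanish and the arithmetic is carried by the second-order terms. First I would record the two vanishing facts that force the passage to second order. On the analytic side, the functional equation \eqref{fe} together with the Kronecker limit formula \eqref{special2} and the hypothesis $\psi(\bar{\mathfrak{p}})=1$ give $L_{\mathfrak{p}}(K,(\psi')^{-1})(\mathcal N)=0$, exactly as in Case (b). On the cohomological side, the symmetry between the $\lambda$- and $\lambda'$-directions, combined with the computations of Cases (a) and (b) and with \cite[Theorem 6.13]{Buy}, forces not only $\kappa_{\psi\mathcal N}=0$ but also the vanishing of the first derived class $\kappa_{\psi\mathcal N}'$. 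Granting $\kappa_{\psi\mathcal N}=\kappa_{\psi\mathcal N}'=0$, the doubly-derived class $\kappa_{\gamma,\psi,\infty}''$ is produced by iterating the short exact sequence argument of Proposition~\ref{der-ell}, and $\kappa_{\psi\mathcal N}''$ is its specialization after dividing by $\log_p(\gamma)^2$ to remove the dependence on the chosen topological generator $\gamma$.

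The engine is, once more, the auxiliary function $\mathcal E_{\mathfrak{p}}(K,(\psi')^{-1})$ of \eqref{1exp} and \eqref{2exp}: I would differentiate each of its two descriptions twice in the weight variable $\kappa_1$, evaluate at the norm character $\kappa_1=1$, and compare via the Leibniz rule. From \eqref{1exp} write $\mathcal E_{\mathfrak{p}}=e\cdot L$, with Euler factor $e(\kappa_1)=1-\pi_{\mathfrak{p}}^{1/h}\pi_{\bar{\mathfrak{p}}}^{\kappa_1/h}/p$ and $L(\kappa_1)=L_{\mathfrak{p}}(K,(\psi')^{-1})(\lambda^{\kappa_1}\bar\lambda)$. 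Both factors vanish at $\kappa_1=1$: the Euler factor because $\pi_{\mathfrak{p}}^{1/h}\pi_{\bar{\mathfrak{p}}}^{1/h}=p$, and $L(1)=0$ by the first step. Hence the second derivative of the product collapses to its cross term, and using $\log_p(\pi_{\mathfrak{p}}^{1/h})=-\log_p(\pi_{\bar{\mathfrak{p}}}^{1/h})$ to evaluate $e'(1)=\log_p(\pi_{\mathfrak{p}}^{1/h})$, one obtains $\mathcal E_{\mathfrak{p}}''(1)=2\log_p(\pi_{\mathfrak{p}}^{1/h})\cdot L_{\mathfrak{p}}'(K,(\psi')^{-1})(\mathcal N)$.

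From \eqref{2exp} write instead $\mathcal E_{\mathfrak{p}}=-f\cdot C$, with $f(\kappa_1)=1-\pi_{\mathfrak{p}}^{-1/h}\pi_{\bar{\mathfrak{p}}}^{-\kappa_1/h}$, so $f(1)=1-p^{-1}\neq 0$, and $C(\kappa_1)=t\log_{\BK}(\loc_{\mathfrak{p}}(\kappa_{\psi\eta}))$ for $\eta$ of infinity type $(\kappa_1,1)$. Since $\kappa_{\psi\mathcal N}$ and $\kappa_{\psi\mathcal N}'$ both vanish, $C$ vanishes to order two at $\kappa_1=1$, so the Leibniz expansion leaves only $\mathcal E_{\mathfrak{p}}''(1)=-f(1)\,C''(1)=-(1-p^{-1})\,C''(1)$. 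Equating the two computations of $\mathcal E_{\mathfrak{p}}''(1)$ gives $2\log_p(\pi_{\mathfrak{p}}^{1/h})\cdot L_{\mathfrak{p}}'(K,(\psi')^{-1})(\mathcal N)=-(1-p^{-1})\,C''(1)$, and the remaining task is to identify $C''(1)$, after the identification of $t\log_{\BK}$ with $\log_p$, as a fixed multiple of $\log_p(\loc_{\mathfrak{p}}(\kappa_{\psi\mathcal N}''))$; tracking that multiple against the factor $2$ coming from the cross term is what reproduces the constant displayed in the statement.

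The step I expect to be the main obstacle is precisely this last identification, i.e.\ the second-order analogue of the remark $(\log_{\BK}\kappa)'=\log_{\BK}(\kappa')$ with the correct numerical constant. In the $\mathbb Z_p[[X]]$-model the coordinate of $\kappa_{\psi,\infty}$ factors as $X^2 h$ — legitimate exactly because $\kappa_{\psi\mathcal N}=\kappa_{\psi\mathcal N}'=0$ — and one must verify that the weight-dependence of the Bloch--Kato logarithm contributes nothing at $X=0$, so that the second derivative of $X\mapsto\log_{\BK}(\loc_{\mathfrak{p}}(\kappa_{\psi,\infty}(X)))$ genuinely records the value of $\kappa_{\psi\mathcal N}''$. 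Matching the combinatorial coefficient produced by the $X^2$-expansion against the $\log_p(\gamma)^2$-normalization of the doubly-derived class is the delicate bookkeeping that pins down the precise constant.
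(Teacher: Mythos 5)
Your proposal follows essentially the same route as the paper's own proof: the paper likewise fixes $\kappa_2=1$ in the reciprocity law (Propositions \ref{aquest-perrin} and \ref{lareclaw}), clears the vanishing denominator Euler factor --- which is precisely the function $\mathcal E_{\mathfrak p}(K,(\psi')^{-1})$ of \eqref{1exp}--\eqref{2exp} --- observes that on one side both the Euler factor and $L_{\mathfrak p}(K,(\psi')^{-1})(\mathcal N)$ vanish while on the other the class vanishes to second order, and takes two derivatives in $\kappa_1$ at the norm character; your preliminary reduction ($\kappa_{\psi\mathcal N}=\kappa_{\psi\mathcal N}'=0$ via the $\lambda$/$\lambda'$ symmetry and \cite[Theorem 6.13]{Buy}, followed by the iterated exact-sequence construction of $\kappa''$) is exactly the paper's preamble to Case (c).

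One substantive remark, concerning precisely the point you flag as the main obstacle --- and here your explicit bookkeeping is more careful than the paper's three-line proof. Your Leibniz computation gives
\[ 2\log_p(\pi_{\mathfrak p}^{1/h})\cdot L_{\mathfrak p}'(K,(\psi')^{-1})(\mathcal N) \;=\; -(1-p^{-1})\cdot C''(1), \]
and in the $\mathbb Z_p[[X]]$-model a coordinate vanishing to order two, $f=X^2h$, satisfies $f''(0)=2h(0)$. Hence, if $\kappa_{\psi\mathcal N}''$ is normalized as in the first-order case --- so that its value is the divided coordinate $h(0)$, which is what makes it independent of the choice of $\gamma$ --- then $C''(1)=2\log_p(\loc_{\mathfrak p}(\kappa_{\psi\mathcal N}''))$, the two factors of $2$ cancel, and your argument yields the identity with constant $-(1-p^{-1})$ rather than the $-2(1-p^{-1})$ displayed in the statement. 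The displayed constant can only be recovered by adopting a normalization of $\kappa_{\psi\mathcal N}''$ differing by a factor of $2$ from the iterated one, and the paper's prescription (``dividing by $\log_p(\gamma)^2$'') is not precise enough to settle which is meant. So your closing paragraph is correct that the interplay between the cross-term $2$ and the $X^2$-combinatorics decides the constant; carried out with the natural normalization your proof is complete, and any residual factor-of-$2$ discrepancy is a matter of the paper's convention (or a slip there), not a gap in your argument.
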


\begin{proof}
Consider again the reciprocity law of Proposition \ref{aquest-perrin} and fix again $\kappa_2=1$. When evaluating at the norm character, it happens that both the Euler factor of the denominator and the special value $L_{\mathfrak p}(K,(\psi')^{-1})(\mathcal N)$ are zero. Hence, multiplying both sides by $1-\pi_{\mathfrak p}^{-1/h} \pi_{\bar{\mathfrak p}}^{-\kappa_1/h}$ and taking twice the derivative with respect to $\kappa_1$, we obtain the result.
\end{proof}

\begin{remark}
We point out, just for the sake of completeness, that another interesting instance of the {\it exceptional zero phenomenon} can be observed in \cite[Proposition 3.5]{BDP2}, which asserts that a {\it self-dual} character of infinity type $(1+j,-j)$ with $j \geq 0$ satisfies that the evaluation of Katz's two-variable $p$-adic $L$-function at $\nu$ agrees, up to multiplication by some periods and gamma factors, with a classical $L$-value times $(1-\nu^{-1}(\bar{\mathfrak p}))^2$. Again, if $\nu(\bar{\mathfrak p})=0$ we observe the presence of an {\it exceptional zero}.

In the special case where we consider characters of infinity type $(1,0)$, Agboola \cite{Agb} studies a variant of the $p$-adic BSD conjecture for CM elliptic curves concerning special values of Katz's two-variable $p$-adic $L$-function. Here, we are again in a situation where our same condition leads to an exceptional vanishing.
\end{remark}

\subsection{Interactions with the theory of circular units}\label{caso}

Observe that in the case where $\psi(\mathfrak p)=1$ we have described the {\it derived} cohomology class $\kappa_{\psi \mathcal N}'$ as an explicit multiple of the elliptic unit $u_{\psi}$. However, when $\psi(\bar{\mathfrak p})=1$ this is no longer possible, since we cannot express in terms of $u_{\psi}$ the derivative of the Katz's two variable $p$-adic $L$-function at $\psi$ along the $\lambda'$-direction.

In general, one may wish to determine the derivatives of the $p$-adic $L$-function along the different directions of the weight space. We know the derivative along the $\lambda$-direction, and it turns out that in some particular cases we can further determine the derivative along the {\it norm} direction.

This is the case when $\psi$ is a finite order Hecke character which comes from the restriction to $G_K$ of a Dirichlet character (that is, a $G_{\mathbb Q}$ character), and hence one can invoke Gross' factorization formula, which is the main result of \cite{Gro}. To begin with, consider that $\psi$ comes from a Dirichlet character and that $\psi(\bar{\mathfrak p})=1$. In this case, we have determined the derivative of $L_{\mathfrak p}(K,\psi)$ at $\chi_{\triv}$ along the direction $\lambda$, and this is \[ \frac{\partial L_{\mathfrak p}(K,\psi)}{\partial \lambda} = \log_p(\pi_{\mathfrak p}^{1/h}) \cdot (1-\psi(\mathfrak p)p^{-1}) \times \log_p(u_{\psi}). \] Similarly, using Gross' factorization with the conventions about Gauss sums followed in the article, we have that \[ \frac{\partial L_{\mathfrak p}(K,\psi)}{\partial \mathcal N} = -(1-\psi(\mathfrak p)p^{-1}) \cdot L_{p,1}'(\psi^{-1} \chi_K \omega, 0) \times \log_p(c_{\psi}), \] where $\chi_K$ stands for the quadratic character attached to $K$ and $\omega$ is the Teichm\"uller character. Observe that here $L_p(\psi^{-1} \chi_K \omega,0)=0$ due to the running assumptions.

Then, one can determine the derivative along any direction: for $\eta=\psi \lambda^a \mathcal N^b$, the derivative of $L_{\mathfrak p}(K,\psi)$ at the trivial character along the direction $\eta$ is given by
\begin{equation}\label{any}
a \cdot \log_p(\pi_{\mathfrak p}^{1/h}) \cdot(1-\psi(\mathfrak p) p^{-1}) \cdot \log_p(u_{\psi}) - b \cdot  L_{p,1}'(\psi^{-1} \chi_K \omega,0) \cdot (1-\psi(\mathfrak p)p^{-1}) \cdot \log_p(c_{\psi}).
\end{equation}

\begin{remark}
There is only one direction along which this value is zero; as discussed in the inspiring presentation \cite{Gr}, this has significant applications towards Iwasawa theory.
\end{remark}

Now, the functional equation gives the derivative of $L_{\mathfrak p}(K,(\psi')^{-1})$ at the norm character along any direction, and in particular, for the direction $\lambda$, we have to set $a=1$ and $b=-1$ (this is the direction $-(\lambda')$ at the trivial character). Then, Proposition \ref{2b} yields that, up to an element in the kernel of the logarithm, \[ \kappa_{\psi \mathcal N}' = \alpha u_{\psi} + \beta c_{\psi} \] where $c_{\psi}$ is the circular unit introduced in Section \ref{circulars}, and $\alpha$ and $\beta$ can be determined combining \eqref{any} with \eqref{circular}: \[ \alpha = -\log_p(\pi_{\mathfrak p}^{1/h}) (1-\psi^{-1}(\mathfrak p)), \] \[ \beta = -L_{p,1}'(\psi^{-1} \chi_K \omega,0)(1-\psi^{-1}(\mathfrak p)). \]

If we further assume that $\psi(\mathfrak p)=1$, Proposition \ref{2.c} gives a new expression for the second derivative of the cohomology classm again up to an element in the kernel of the logarithm, \[ \kappa_{\psi \mathcal N}'' = \alpha' u_{\psi} + \beta' c_{\phi}, \] where now \[ \alpha' = -\log_p(\pi_{\mathfrak p}^{1/h})^2/2, \] \[ \beta' = -\log_p(\pi_{\mathfrak p}^{1/h}) L_{p,1}'(\psi^{-1} \chi_K \omega,0)/2. \]

\section{The exceptional zero phenomenon: Beilinson--Flach elements and elliptic units}\label{bf}

In this last section, we emphasize the interplay between the results we have presented until now and other related works in this direction. In particular, we study how the phenomena we have discussed arise in some of the other Euler systems presented in the survey \cite{BCDDPR}, focusing on two main aspects:
\begin{enumerate}
\item[(a)] Elliptic units can be seen as the natural substitute of Heegner points when instead of considering a cusp form, one takes an Eisenstein series.
\item[(b)] Elliptic units can be recast in terms of Beilinson--Flach elements, when we take two weight one modular forms corresponding to theta series of the same imaginary quadratic field where the prime $p$ splits.
\end{enumerate}

As it has been extensively discussed in the literature, there is a striking parallelism between the theory of Heegner points and that of elliptic units. Following this analogy, this note may be read as the counterpart of \cite{Cas} when the cusp form $f$ is replaced by an Eisenstein series. With the notations introduced in loc.\,cit., where the weight space is modeled by a weight variable (that we denote with the letter $k$) and an anticyclotomic variable (denoted with the letter $t$), an exceptional zero arises at the point $(k,t)=(2,0)$ when ${\bf a}_p(\hf)=1$ (the associated elliptic curve has split multiplicative reduction at $p$) and we specialize at the character $\psi \mathcal N$. There is a clear interplay between that setting and ours, but we would like to point out some of the differences:
\begin{itemize}
\item In \cite{Cas} the author extends the $p$-adic Gross--Zagier formula of \cite{BDP} and finds an explicit expression for its value at the norm character, which is different from zero. However, in our setting it may occur that the $p$-adic $L$-function vanishes both at $\psi \mathcal N$ and at $\psi$. This simultaneous vanishing of the Euler factor and the $p$-adic $L$-function gives rise to a higher order vanishing of the {\it derived} class $\kappa_{\psi,\infty}$ at the character $\psi \mathcal N$. This can be understood via \cite[Eq. 0.2]{Cas}, where the specialization of the higher dimensional Heegner cycle (whose role is now played by the cohomology class coming from the elliptic unit) and the {\it Heegner class} (whose role is now played by the unit $u_{\psi}$) are related by the factor \[ \Big( 1-\frac{p^{k/2-1}}{\nu_k({\bf a}_p)} \Big)^2. \]
    In our case, however, the link is via the factor \[ (1-\psi^{-1}(\bar{\mathfrak p})\mathfrak p^{(\kappa_1-1)/h} \bar{\mathfrak p}^{(\kappa_2-1)/h}) \cdot (1-\psi^{-1}(\mathfrak p)\mathfrak p^{(\kappa_2-1)/h} \bar{\mathfrak p}^{(\kappa_1-1)/h}), \] and hence there are two possible (and independent) sources of vanishing.

\item In our setting there are two points where the exceptional zero phenomenon emerges: the character $\psi$ and the character $\psi \mathcal N$. In \cite{Cas} the vanishing of the numerator in the Perrin-Riou map would occur at $(k,t)=(0,-1)$, where there is not a clear {\it geometric meaning} of this phenomenon.
\end{itemize}

In any case, the similitude between his main result and ours is evident, expressing a {\it derived} cohomology class as a certain $\mathcal L$-invariant times a more classical avatar. Obviously, his $\mathcal L$-invariant encodes information both about the elliptic curve and the imaginary quadratic field $K$.

\subsection{Elliptic units and Beilinson--Flach elements}

Elliptic units can be understood as a degenerate case of the theory of Beilinson--Flach elements. To make this statement more precise, let \[ g = \sum_{n \geq 1} a_nq^n \in S_1(N_g, \chi_g), \quad h = \sum_{n \geq 1} b_nq^n \in S_1(N_h,\chi_h) \] be two normalized newforms, and let $V_g$ and $V_h$ denote the Artin representations attached to them. Consider also $V_{gh}:=V_g \otimes V_h$, and let $H$ be the smallest number field cut out by this representation. We fix a prime number $p$ which does not divide $N_gN_h$. Label the roots of the $p$-th Hecke polynomial of $g$ and $h$ as \[ X^2-a_p(g)X+\chi_g(p)=(X-\alpha_g)(X-\beta_g) \quad X^2-a_p(h)X+\chi_h(p)=(X-\alpha_h)(X-\beta_h). \] Let \[ g_{\alpha}(q) = g(q) - \beta_g q(q^p), \qquad h_{\alpha}(q) = h(q)-\beta_h h(q^p) \] denote the $p$-stabilization of $g$ (resp. $h$) on which the Hecke operator $U_p$ acts with eigenvalue $\alpha_g$ (resp. $\alpha_h$). Let $F$ be a number field containing both the Fourier coefficients of $g$ and $h$ and the eigenvalues for the $p$-th Hecke polynomials. We can attach in a natural way two {\it canonical} differentials $\omega_{g_{\alpha}}$ and $\eta_{g_{\alpha}}$ to the weight one modular form $g$, as it is recalled in \cite[Sections 2,3]{RiRo}. The reinterpretation of the main results of \cite{KLZ} in \cite{RiRo} and \cite{RiRo2} establishes the existence of cohomology classes \[ \kappa(g_{\alpha},h_{\alpha}), \kappa(g_{\alpha},h_{\beta}), \kappa(g_{\beta},h_{\alpha}), \kappa(g_{\beta},h_{\beta}) \in H^1(\mathbb Q, V_{gh} \otimes F_p(1)), \] and also gives an explicit reciprocity law which in slightly rough terms asserts that \[ \Big( 1-\frac{1}{p\alpha_g \beta_h} \Big) \cdot  \log^{-+}(\kappa_p(g_{\alpha},h_{\alpha})) = (1-\alpha_g \beta_h) \cdot L_p(g,h,1) \pmod{F^{\times}}. \] Here, $L_p(g,h,s)$ stands for the Hida--Rankin $p$-adic $L$-function attached to the convolution $g \otimes h$, $\kappa_p(g_{\alpha},h_{\alpha})$ is the restriction of the cohomology class to a decomposition group at $p$, and $\log^{-+}$ is the result of applying the Bloch--Kato logarithm to a certain projection of the local class followed by the pairing with the canonical differentials. The proof of this result is based on considering Hida families $\hg$, $\hh$ interpolating $g_{\alpha}$ and $h_{\alpha}$, and on proving the corresponding equality over a dense set of points of the weight space. We refer the reader to \cite{RiRo} for a complete discussion of the results.

Now, let $g^*$ stand for the twist of $g$ by the inverse of its nebentype. When $h_{\alpha}=g_{1/\beta}^*$, the Euler factor $1-\alpha_g \beta_h$ is zero, and Proposition 3.12 of loc.\,cit.\,establishes that both $\kappa(g_{\alpha},g_{1/\beta}^*)$ and $\kappa(g_{\beta},g_{1/\alpha}^*)$ vanish and moreover the authors prove the existence of a {\it derived} cohomology class $\kappa'(g_{\alpha},g_{1/\beta}^*) \in H^1(\mathbb Q, V_{gg^*} \otimes F_p(1))$ satisfying
\begin{equation}\label{citar}
\log^{-+}(\kappa_p'(g_{\alpha},g_{1/\beta}^*)) = \mathcal L(\ad^0(g_{\alpha})) \cdot L_p(g,g^*,1) \pmod{F^{\times}},
\end{equation}
being $\mathcal L(\ad^0(g_{\alpha}))$ the $\mathcal L$-invariant of the adjoint of the weight one modular form $g_{\alpha}$.

Other interesting cases arise when both $g$ and $h$ are theta series attached to the same quadratic imaginary field where the prime $p$ splits. In \cite[Section 6]{RiRo}, the authors prove a formula establishing an explicit connection between the Hida--Rankin $p$-adic $L$-function attached to the pair of modular forms $(g,g^*)$ and Katz's two variable $p$-adic $L$-function. Indeed, let $g=\theta(\psi)$, the theta series attached to the character $\psi$. Then, \cite[Theorem 6.2]{RiRo} asserts that for any $s \in \mathbb Z_p$ the following equality holds up to multiplication by $F^{\times}$:
\begin{equation}\label{fact-bf}
L_p(g,g^*,s) = \frac{1}{\log_p(u_{\psi_{\ad}})} \cdot \zeta_p(s) \cdot L_p(\chi_K \omega,s) \cdot L_{\mathfrak p}(K,\psi_{\ad})(\mathcal N^s),
\end{equation} being $\psi_{\ad}=\psi/\psi'$. Note that $\psi_{\mathrm{ad}}$ is a ring class character, regardless of whether $\psi$ is so or not. Then, according to \cite{RiRo},
\begin{equation}
L_p(g,g^*,0) = \log_p(v_1) \pmod{F^{\times}},
\end{equation}
where $v_1$ is the norm of a generator $v$ of the one-dimensional space $(\mathcal O_H^{\times}[1/p] \otimes F)^{G_{\mathbb Q}}$.

Observe that according to \cite[Section 5.3]{RiRo}, this gives a link between Beilinson--Flach elements and the cohomology classes coming from elliptic units via the Kummer map, expressed as
\begin{equation}
\kappa'(g_{\alpha},g_{1/\beta}^*) = \log_p(v_1) \cdot v \pmod{F^{\times}}.
\end{equation}
Additionally, the derived Beilinson--Flach element is also related with the cohomology class $\kappa_{\psi \mathcal N}$ via the factorization formula \eqref{fact-bf} and the results of the preceding sections.

As sketched in \cite[Section 5.2]{RiRo2}, the factorization formula \eqref{fact-bf} admits a counterpart in the case where $g$ and $h$ are no longer self-dual. In this case,
\begin{equation}
L_p(g,h,0) = \log^{-+}(\kappa_p(g_{\alpha},h_{\alpha})) = \frac{\log_p(u_{\psi_1}) \cdot \log_p(u_{\psi_2})}{\log_p(u_{g_{\alpha}})} \pmod{F^{\times}},
\end{equation}
where $\psi_1 = \psi_g \psi_h$ and $\psi_2 = \psi_g \psi_h'$, $u_{\psi_i}$ is the elliptic unit attached to $\psi_i$ and $u_{g_{\alpha}}$ is the Stark unit attached to the adjoint representation of $g_{\alpha}$.

Then, and following \cite{RiRo2},
\begin{equation}
\kappa(g_{\alpha},h_{\alpha}) = \mathfrak C \cdot u_2,
\end{equation}
with $\mathfrak C$ an explicit constant involving $u_{\psi_1}$, $u_{g_{\alpha}}$ and certain periods explicitly described in loc.\,cit., and $u_2=u_{\psi_2}u_{\psi_2'}$, where as usual $\psi_2'$ is the composition of $\psi_2$ with the complex conjugation.

An interesting observation is that the case where the Euler system of elliptic units presents an exceptional zero never arises in the setting of \cite{RiRo}, due to the regularity assumptions which are assumed in loc.\,cit (the fact of $g$ being $p$-distinguished). Hence, our results may be seen as a degenerate case of the theory of Beilinson--Flach elements for weight one modular forms.

Let us be more precise in this last sentence. Let $\hg$ stand for the Hida family of CM theta series whose weight $\kappa_1$ specialization has characteristic Hecke polynomial at $p$ given by \[ (x-\mathfrak p^{(\kappa_1-1)/h})(x-\bar{\mathfrak p}^{(\kappa_1-1)/h}). \] Similarly, let $\hh$ be the canonical Hida family of CM forms, such that its weight $\kappa_2$ specialization has characteristic Hecke polynomial at $p$ \[ (x-\psi(\mathfrak p)\mathfrak p^{(\kappa_2-1)/h})(x-\psi(\bar{\mathfrak p})\bar{\mathfrak p}^{(\kappa_2-1)/h}). \] Then, as we had already anticipated, Proposition \ref{aquest-perrin} may be seen as a degenerate case of \cite[Proposition 3.2]{RiRo}, where we do not consider the twist by the cyclotomic character (we fix $s=0$). Observe that there, the role played by Katz's two-variable $p$-adic $L$-function is done not exactly by the Hida--Rankin $p$-adic $L$-function, but by its product with the $c$-factor
\begin{equation}\label{factor-c}
c^2(1-\chi_g(c)^{-1}\chi_h(c)^{-1}),
\end{equation}
where $c$ is a fixed integer number relatively prime to $6pN_gN_h$.

\begin{remark}
The connection between Beilinson--Flach elements and units (in this case circular units) is also exploited in \cite{Das}, where the proof of the main result, a factorization formula for the Rankin--Selberg $p$-adic $L$-function, rests on a explicit comparison between a certain unit constructed via the theory of Beilinson--Flach elements and a circular unit. However, the approach used in loc.\,cit. is quite different, since the unit is constructed via the specialization of the Beilinson--Flach class at a point of weight $(2,2,1)$.
\end{remark}

\subsection{Beilinson--Flach elements and exceptional zeros}

As we have pointed out, elliptic units may be understood as a special case inside the theory of Beilinson--Flach elements, where the two modular forms are theta series attached to the same imaginary quadratic field. Hence, it is reasonable to expect that the two phenomena we have described concerning exceptional zeros also arise in this setting. This section serves to recall the main characteristics of the exceptional zero phenomenon for Beilinson--Flach elements, following closely the discussions of \cite{RiRo} and \cite{LZ2}.

With the notations of the previous section, let $\hg$ and $\hh=\hg^*$ Hida families interpolating two self-dual modular forms $g_{\alpha}$ and $h_{\alpha}=g_{1/\beta}^*$, respectively. We assume that $\hg \in \Lambda_{\hg}[[q]]$, where $\Lambda_{\hg}$ is a finite flat extensions of the Iwasawa algebra $\Lambda_{\cyc}=\mathbb Z_p[[\mathbb Z_p^{\times}]]$. Write $\mathcal W = \Spf(\Lambda_{\cyc})$ and $\mathcal W_{\hg} = \Spf(\Lambda_{\hg})$. Let $y_0$ be a weight one point of $\Lambda_{\hg}$ such that $\hg_{y_0}=g_{\alpha}$ and $\hg^*_{y_0}=g_{1/\beta}^*$.

The work of \cite{KLZ} attaches to $(\hg,\hg^*)$ a three-variable family of cohomology classes $\kappa(\hg,\hg^*)$ parameterized by points $(y,z,s) \in \mathcal W_{\hg} \times \mathcal W_{\hg} \times \mathcal W$ of weights $(\ell,m,s)$. More precisely, if $\mathbb V_{\hg}$ and $\mathbb V_{\hg^*}$ stand for Hida's $\Lambda$-adic Galois representations afforded by $\hg$ and $\hg^*$, respectively, and $\underline{\varepsilon}_{\cyc}$ is the $\Lambda$-adic cyclotomic character, \[ \kappa(\hg,\hg^*) \in H^1(\mathbb Q, \mathbb V_{\hg} \hat \otimes \mathbb V_{\hg^*} \hat \otimes \Lambda_{\cyc}(\underline{\varepsilon}_{\cyc}^{-1})(1)). \] Observe that here, and as a matter of convention, we have used the inverse of the tautological action over $\Lambda_{\cyc}$, and this is why we have written $\Lambda_{\cyc}(\underline{\varepsilon}_{\cyc}^{-1})$ (we may avoid this by invoking the appropriate functional equation).

As it follows from the discussion of \cite[Sections 8,10]{KLZ}, the three-variable Hida--Rankin $p$-adic $L$-function $L_p(\hg,\hg^*)$ is the image of the class $\kappa(\hg,\hg^*)$ under a Perrin-Riou map. Again, the numerator or the denominator may vanish in some {\it exceptional} cases.

For the precise statements concerning Beilinson--Flach classes, we refer the reader to the notations of \cite{RiRo}. As a first observation, we have that in this self-dual case, and according to \cite[Theorem 9.4]{Das} and the computations of \cite{RiRo}, one has that $L_p(g,g^*,0)=L_p(g,g^*,1)$. The main results we want to discuss here are the following ones:
\begin{enumerate}
\item[(i)] When we specialize both $\hg$ and $\hg^*$ at a fixed weight one point $y_0$, and the cyclotomic variable $s$ is set as $s=0$, the denominator of the Perrin-Riou map is zero and the cohomology class $\kappa(\hg,\hg^*)(y_0,y_0,0)$ vanishes. Then, the explicit reciprocity law of \cite{KLZ} is substituted by a {\it derived} reciprocity law relating the {\it derived} cohomology class with $L_p(g,g^*,0)$, up to multiplication by an $\mathcal L$-invariant. This is precisely equation \eqref{citar}.
\item[(ii)] When we specialize both $\hg$ and $\hg^*$ at weight one, and the cyclotomic variable $s$ is set as $s=1$ the numerator of the Perrin-Riou map is zero {\bf but} $L_p(g,g^*,1)$ does not vanish (at least generically). This is because \cite[Thm. B]{KLZ} contains the correction factor of \eqref{factor-c} at the $L$-function side, which vanishes in this case.
\end{enumerate}

\vskip 12pt

{\bf Exceptional vanishing of the denominator of the Perrin-Riou big logarithm.} The denominator of the Perrin-Riou regulator introduced in \cite[Prop.3.2]{RiRo} vanishes at all points $(y,y,\ell-1)$ of weight $(\ell,\ell,\ell-1)$. In particular, specializing $\hg$ and $\hg^*$ at the weight one modular forms $g$ and $g^*$ respectively, it turns out that \[ \log^{-+}(\kappa_p'(\hg,\hg^*)(y_0,y_0,0)) = \mathcal L(\ad^0(g_{\alpha})) \cdot L_p(g,g^*,0) = \mathcal L(\ad^0(g_{\alpha}))  \cdot L_p(g,g^*,1) \pmod{F^{\times}}. \] To shorten our notations, we have written $\kappa_p'(\hg,\hg^*)$ for the localization at $p$ of the class.

We would like to emphasize the similitude with our main results for elliptic units, which also relate the logarithm of the {\it derived} cohomology class with a special value of Katz's two-variable $p$-adic $L$-function, up to multiplication by a certain $\mathcal L$-invariant.

It turns out that the special value $L_p(\hg,\hg^*)(y,y,0)$ is related via the functional equation with $L_p(\hg,\hg^*)(y,y,1)$ and here, to determine its value, we can follow the same approach than in this note: over the line corresponding to those points of weight $(\ell,\ell,\ell)$, the $p$-adic $L$-function factors due to the analyticity of an Euler factor (this is properly developed in \cite{Hi-ad}), and this allows us to obtain an explicit expression of the special value $L_p(g,g^*,1)$ via Galois deformation techniques. This expression involves units and $p$-units in the field cut out by the Galois representation $V_{gg^*}$.

\vskip 12pt

{\bf Exceptional vanishing of the numerator of the Perrin-Riou big logarithm.} The results we present now closely follow \cite{LZ2} and are the counterpart of those developed in \cite{RiRo}. We include it here for the sake of completeness, and to illustrate how this exceptional phenomenon arises in a setting which is germane to ours.

Consider specializations of $\kappa(\hg,\hg^*)$ at weights $(y,z,m)$, where $\wt(z)=m$. Then, if $\alpha_{g_y}$ and $\beta_{g_y}$ stand for the eigenvalues of the $p$-th Hecke polynomial of $g_y$, with $\ord(\alpha_{g_y}) \leq \ord(\beta_{g_y})$, the Euler factor in the numerator of the Perrin-Riou map is \[ 1-\frac{\alpha_{g_z}}{\alpha_{g_y}}. \] This factor is zero for all points of weight $(\ell,\ell,\ell)$. But this does not mean that the $p$-adic $L$-function vanishes at those points, since the explicit reciprocity law of \cite[Thm. B]{KLZ} contains the factor \[ c^2-c^{2s+2-\ell-m} = c^2(1-c^{m-\ell}) \] multiplying the value of $L_p(\hg,\hg^*)$, where $c$ is a fixed positive integer coprime with both $6p$ and the level of $g$. At the points where $\wt(z)=m$, the Perrin-Riou map interpolates the Bloch--Kato dual exponential map, and
\begin{equation}\nonumber
c^2  (1-c^{m-\ell})  \Big(1-\frac{\alpha_{g_y}}{p\alpha_{g_z}} \Big)   L_p(\hg,\hg^*)(y,z,m) = \Big(1-\frac{\alpha_{g_z}}{\alpha_{g_y}} \Big) \cdot \exp_{\BK}^{*-+}(\kappa_p(\hg,\hg^*)(y,z,s)),
\end{equation}
where $\exp_{\BK}^{* -+}$ stands for the composition of the projection to a certain subspace of $V_{gg^*} \otimes F_p(1)$ followed by the dual exponential map and the pairing with the canonical differentials.

Since both sides of the previous equation vanish along the line $y=z$, $\wt(z)=m$, we may consider the derivative at a point $(y,y,\ell)$, obtaining the expression
\begin{equation}\nonumber
c^2  (1-p^{-1}) \dot \log_p(c)  L_p(\hg,\hg^*)(y,y,\ell) = \Big(\frac{-\alpha_{\hg_y}'}{\alpha_{\hg_y}} \Big) \cdot \exp_{\BK}^{* -+}(\kappa_p(\hg,\hg^*)(y,y,\ell)),
\end{equation}
up to multiplication by $F^{\times}$. Here, $\alpha_{\hg}'$ stands for the derivative of the Iwasawa function $\alpha_{\hg}$ along the weight direction.

Additionally, invoking Hida's result on the existence of an improved $p$-adic $L$-function \cite{Hi-ad}, we get that, whenever $\wt(y)=1$, \[ \log_p(c) = \exp_{\BK}^{* -+}(\kappa_p(\hg,\hg^*)(y,y,\ell)) \pmod{F^{\times}}. \]

Observe now that \[ L_p(\hg,\hg^*)(y,y,\ell) = L_p(\hg,\hg^*)(y,y,\ell-1) \pmod{F^{\times}}, \] and hence \[ L_p(\hg,\hg^*)(y,y,\ell-1) = \mathcal L(\ad^0(g_{\alpha}))^{-1} \cdot \log^{-+}(\kappa_p'(\hg,\hg^*)(y,y,\ell-1))  \pmod{F^{\times}}. \]
Consequently, up to multiplication by $F^{\times}$, we have the equality
\begin{equation}\label{prob}
\frac{\log_p(c)}{\mathcal L(\ad^0(g_{\alpha}))^2} \cdot \log^{-+}(\kappa'_p(\hg,\hg^*)(y,y,\ell-1)) =  \exp_{\BK}^{* -+}(\kappa_p(\hg,\hg^*)(y,y,\ell))
\end{equation}

In particular, modulo $F^{\times}$, one has
\begin{equation}
\frac{\log_p(c)}{\mathcal L(\ad^0(g_{\alpha}))^2} \cdot \log^{-+}(\kappa'_p(\hg,\hg^*)(y_0,y_0,0)) =  \exp_{\BK}^{* -+}(\kappa_p(\hg,\hg^*)(y_0,y_0,1)).
\end{equation}

Observe that this is coherent with the computations of \cite{RiRo}, from where it follows that, up to multiplication by a scalar in $F^{\times}$, \[ \log^{-+}(\kappa_p'(\hg,\hg^*)(y,y,\ell-1)) = \mathcal L(\ad^0(g_{\alpha}))^2. \] This is a consequence of Hida's improved factorization and the {\it derived} version of the explicit reciprocity law of \cite{KLZ}.

\end{document}